\documentclass{amsart}
\usepackage{amsmath}
\usepackage{amssymb}
\usepackage[all]{xy}
\usepackage{graphicx}
\usepackage{mathrsfs}
\usepackage{pst-node}
\usepackage{tikz-cd} 
\usepackage{graphicx}
\usepackage[all]{xy}
\SelectTips{cm}{}

\usepackage[dvips]{epsfig}

\usepackage{pinlabel}

\setcounter{tocdepth}{1}

\numberwithin{equation}{section}

\usepackage{url}

\usepackage[latin1]{inputenc}
\usepackage{xspace,amssymb,amsfonts,euscript}
\usepackage{amsthm,amsmath}
\usepackage{palatino}
\usepackage{euscript}
\input xy \xyoption {all}

\usepackage{tikz}

\RequirePackage{color}
\definecolor{myred}{rgb}{0.75,0,0}
\definecolor{mygreen}{rgb}{0,0.5,0}
\definecolor{myblue}{rgb}{0,0,0.65}

\RequirePackage{ifpdf}
\ifpdf
  \IfFileExists{pdfsync.sty}{\RequirePackage{pdfsync}}{}
  \RequirePackage[pdftex,
   colorlinks = true,
   urlcolor = myblue, 
   citecolor = mygreen, 
   linkcolor = myred, 
   pagebackref,
   bookmarksopen=true]{hyperref}
\else
  \RequirePackage[hypertex]{hyperref}
\fi

\RequirePackage{ae, aecompl, aeguill} 


  \def\hg{{\mathfrak h}}

    \def\RM{{\mathbb{R}}}

    \def\ZM{{\mathbb{Z}}}

    \def\AC{{\mathcal{A}}}
    \def\BC{{\mathcal{B}}}

  \def\eb{{\mathbf e}}  
  \def\fb{{\mathbf f}}  \def\FC{{\mathcal{F}}}
    \def\GC{{\mathcal{G}}}
\def\HC{{\mathbf H}}    \def\HC{{\mathcal{H}}}
    \def\IC{{\mathcal{I}}}

    \def\LC{{\mathcal{L}}}
    \def\MC{{\mathcal{M}}}
    \def\NC{{\mathcal{N}}}



\def\a{\alpha}

\def\r{\rho}

\newcommand{\LL}{LL}
\newcommand{\oLL}{\overline{LL}}
\newcommand{\LLL}{\mathbb{LL}}

\newcommand{\co}{\colon}
\newcommand{\define}{\stackrel{\mbox{\scriptsize{def}}}{=}}

\newcommand{\nc}{\newcommand} \newcommand{\renc}{\renewcommand}

\newcommand{\rdots}{\mathinner{ \mkern1mu\raise1pt\hbox{.}
    \mkern2mu\raise4pt\hbox{.}
    \mkern2mu\raise7pt\vbox{\kern7pt\hbox{.}}\mkern1mu}}

\def\un{\underline}

\def\to{\rightarrow}

\def\longto{\longrightarrow}

\def\onto{\twoheadrightarrow}
\nc{\triright}{\stackrel{[1]}{\to}}
\nc{\longtriright}{\stackrel{[1]}{\longto}}

\nc{\Hb}{H^\bullet}

\nc{\Br}{\mathcal{\color{blue}b}}
\nc{\HotRR}{{}_R\mathcal{K}_R}
\nc{\HotR}{\mathcal{K}_R}
\nc{\excise}[1]{}
\nc{\defect}{\text{df}}
\nc{\h}[1]{\underline{H}_{#1}}
\mathchardef\mhyphen="2D

\nc{\Ga}{\mathbb{G}_a} 
\nc{\Gm}{\mathbb{G}_m} 

\nc{\Perv}{{\mathbf{P}}}

\nc{\IH}{{\mathrm{IH}}}

\nc{\ic}{\mathbf{IC}}

\nc{\gl}{{\mathfrak{gl}}}
\renc{\sl}{{\mathfrak{sl}}}
\renc{\sp}{{\mathfrak{sp}}}

\renc{\Im}{\textrm{Im}}

\nc{\HCM}{H^{BM}}

 \DeclareMathOperator{\Hom}{Hom}
\DeclareMathOperator{\gHom}{\Hom^\bullet}
 \DeclareMathOperator{\ch}{ch}
\DeclareMathOperator{\grk}{grk}
\DeclareMathOperator{\End}{End} 

\DeclareMathOperator{\id}{id}

\newtheorem{thm}{Theorem}[section]
\newtheorem{lem}[thm]{Lemma}

\newtheorem{prop}[thm]{Proposition}
\newtheorem{cor}[thm]{Corollary}

\newtheorem{Eg}[thm]{Example}

\theoremstyle{definition}
\newtheorem{defi}[thm]{Definition}
\newtheorem{notation}[thm]{Notation}

\theoremstyle{remark}
\newtheorem{remark}[thm]{Remark}

\DeclareMathOperator{\Ext}{Ext}

\def\Iff{\Longleftrightarrow}

\nc{\simto}{\stackrel{\sim}{\to}}

\nc{\SD}{\mathcal{H}_{\mathrm{BS}}}

\nc{\todo}[1]{{\color{red}{#1}}}
\nc{\gchange}[1]{{\color{blue}{GW: #1}}}
\nc{\nchange}[1]{{\color{magenta}{#1}}}

\hyphenation{Bez-rukav-ni-kov}

\title{The anti-spherical category}

 \author{Nicolas Libedinsky and Geordie Williamson}

\begin{document}


\begin{abstract}

We study a diagrammatic categorification (the ``anti-spherical
category'') of the
  anti-spherical module for any Coxeter group. We deduce
  that Deodhar's (sign) parabolic Kazhdan-Lusztig polynomials 
  have non-negative coefficients, and that a monotonicity conjecture
  of Brenti's holds. The main technical observation is a localization
  procedure for the anti-spherical category, from which we construct a ``light leaves'' basis of
  morphisms. Our techniques may be used to calculate many new elements
  of the $p$-canonical basis in the anti-spherical module.

\end{abstract}

\maketitle


\section{Introduction}

\subsection{} Kazhdan-Lusztig polynomials are remarkable polynomials associated to pairs
  of elements in a Coxeter group. They describe the base change matrix
  between the standard and the Kazhdan-Lusztig basis of the Hecke
  algebra. Since their discovery by Kazhdan and Lusztig in 1979, these
  polynomials have found applications throughout representation
  theory.

A fascinating aspect of the theory is that these polynomials
  are elementary to define and compute, however they also have deep
  properties that are far from obvious from their definition. For
  example, it was conjectured by Kazhdan and Lusztig in
  \cite{KLPolynomials} that these polynomials have non-negative
  coefficients. This conjecture was established soon after by Kazhdan
  and Lusztig \cite{KLSchubert} if the underlying Coxeter group is a
  finite or an affine Weyl group.  Kazhdan and Lusztig's
  conjecture was established in complete generality by Elias and the
  second author via Soergel bimodule techniques \cite{EW2}.

In 1987 Deodhar introduced parabolic
  Kazhdan-Lusztig polynomials \cite{De}. These polynomials are defined starting
  from the choice of a Coxeter group, a standard parabolic subgroup
  and a sign. They describe the base change matrix between the
  standard and Kazhdan-Lusztig basis of the spherical or
  anti-spherical (depending on the sign) module for the Hecke algebra. Kazhdan-Lusztig
  polynomials agree with parabolic Kazhdan-Lusztig polynomials for the
  choice of the trivial parabolic subgroup.
 Parabolic Kazhdan-Lusztig polynomials are also known to have deep
 representation theoretic and geometric significance. 
One of the two main theorems of this paper is the following:

  \begin{thm}\label{thm:pos}
    Parabolic Kazhdan-Lusztig polynomials associated to the
    sign representation have non-negative coefficients, for any Coxeter
    system and any choice of standard parabolic subgroup.
  \end{thm}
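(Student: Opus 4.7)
The plan is to categorify the antispherical module $M^{\textrm{asph}}$ by constructing a diagrammatic quotient category $\DC^{\textrm{asph}}$ of the Elias--Williamson diagrammatic Hecke category: one formally kills every identity morphism of a Bott--Samelson object whose leftmost tensor factor is $B_s$ for some $s$ in the chosen parabolic subgroup $W_I$. After passing to the graded Karoubi envelope, I would show that the split Grothendieck group is naturally isomorphic to $M^{\textrm{asph}}$, with classes of Bott--Samelson objects corresponding to the images of standard basis elements (up to an appropriate power of $v$).

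The technical heart of the argument is constructing an explicit ``light leaves'' basis for $\Hom$-spaces between Bott--Samelson objects in $\DC^{\textrm{asph}}$. I would adapt Libedinsky's inductive construction, following a reduced expression letter by letter and applying the usual cup/cap/braid elementary light leaves, but with the key modification that any partial diagram forced to factor through a Bott--Samelson object beginning with some $s \in I$ is declared zero. Spanning is then a diagrammatic exercise: rewrite an arbitrary morphism in light-leaves normal form using the Soergel calculus relations, and discard everything that factors through a killed object.

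Linear independence is the main obstacle, and this is where the ``localisation procedure'' mentioned in the abstract is essential. In the ordinary Hecke category, independence of light leaves is proved by embedding into a category of ``standard'' objects indexed by $W$ over an appropriate localisation of the polynomial ring, where morphisms admit an explicit combinatorial description. I would seek a parabolic analogue in which only standard objects indexed by minimal coset representatives in $W_I \backslash W$ survive, together with a natural pairing of light leaves against dual standard morphisms that can be shown to be unitriangular. Setting up this parabolic standardisation and verifying that the induced map out of $\DC^{\textrm{asph}}$ is faithful on the span of light leaves is, I expect, the most delicate step of the proof.

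Once the basis is in place, Theorem \ref{thm:pos} follows along the familiar Soergel route. Graded dimensions of $\Hom$-spaces between Bott--Samelson objects in $\DC^{\textrm{asph}}$ are non-negative Laurent polynomials in $v$; a standard uniqueness and self-duality argument identifies the indecomposable self-dual objects with the parabolic Kazhdan--Lusztig basis elements of $M^{\textrm{asph}}$; and the base-change matrix from the standard basis to these indecomposables records graded multiplicities in Bott--Samelson objects, whose entries are therefore automatically non-negative, which is exactly the statement of the theorem.
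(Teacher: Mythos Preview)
Your outline matches the paper's strategy almost exactly: the quotient of the diagrammatic Hecke category by Bott--Samelsons beginning in $I$, the $I$-antispherical light leaves, the localisation with standard objects $Q_x$ indexed by ${}^IW$, the unitriangular pairing, and the final character argument are all precisely what the authors do.

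There are two places, however, where your sketch is too optimistic and where the paper supplies a genuinely new idea.

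First, you correctly flag faithfulness of the localisation as ``the most delicate step'' but offer no mechanism. Constructing the idempotents cutting out the $Q_x$ is straightforward; what is not is showing that $Q_x \ne 0$, i.e.\ that $\End(Q_x) \supseteq Q_I$. The paper's device for finitary $I$ is the \emph{infinite twist} $F_I^\infty = \lim_m F_{w_I}^m$ in the homotopy category of Soergel bimodules: one proves $F_I^\infty B_s = 0$ for $s \in I$, obtains from this a functor out of ${}_Q\NC$ to an honest bimodule category, and checks there that the image of $Q_x$ is nonzero (Proposition~\ref{End} and Lemma~\ref{same}). For non-finitary $I$ a further generators-and-relations construction of an $\HC$-module is required (\S\ref{endproof2}). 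Without some such argument the unitriangularity is vacuous and linear independence of light leaves fails.

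Second, your claim that ``a standard uniqueness and self-duality argument'' pins down the indecomposables as the Kazhdan--Lusztig basis is not quite right. Self-duality of $\ch([D_x])$ follows formally, but to conclude $\ch([D_x]) = d_x$ one must also know that the lower coefficients lie in $v\ZM[v]$. The paper obtains this by invoking the main theorem of \cite{EW2} (Soergel's conjecture): it is the positivity of degrees in $\gHom_{\HC}$ between distinct indecomposables that forces $n'_{y,x} \in v\ZM[v]$ for $y<x$. This is an essential external input, not a formality.
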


\newpage

Two remarks on this theorem:
\begin{enumerate}
\item  Kazhdan and Lusztig have  a theorem that identifies
  Kazhdan-Lusztig polynomials with the Poincar\'e polynomials of the
  stalks of intersection cohomology complexes on the flag variety. The parabolic analogue 
  of that theorem  was   given  in a beautiful paper by Kashiwara and Tanisaki
  \cite{KT} in 2002.  Thus the above
  theorem was already known for the case where both the  Coxeter group arises as the Weyl group of a
  symmetrisable Kac-Moody Lie algebra (this is the case if and only if  the order 
of the product of any two simple reflections belongs to the set   $\{ 2, 3, 4, 6, \infty \}$)  and  the standard parabolic subgroup is finite. 
\item The analogue of this theorem for the trivial representation is known if the standard parabolic subgroup is
finite. This is because parabolic Kazhdan-Lusztig polynomials associated to the trivial representation
are special cases of ordinary Kazhdan-Lusztig polynomials (see section \ref{Relations} for details).
We believe that the methods of this paper  can  be adapted to deduce the analogue of this
theorem for parabolic Kazhdan-Lusztig polynomials associated to the
trivial representation without the finiteness condition. 
\end{enumerate}

\subsection{} 
The proof that Kazhdan-Lusztig polynomials have non-negative
coefficients in \cite{EW2} relies on a detailed study of a
categorification of the Hecke algebra via certain bimodules
constructed by Soergel \cite{S90,SB}, which 
have come to be known as Soergel bimodules. The essential point
(``Soergel's conjecture'') is that the Kazhdan-Lusztig basis arises 
as the classes in the Grothendieck group of indecomposable Soergel
bimodules. Thus Soergel bimodules  provide a setting where
Kazhdan-Lusztig polynomials   have an interpretation as graded dimensions
of certain Hom spaces.

More
recently, Elias and the second author described the monoidal category of
Soergel bimodules by generators and relations \cite{EW}. The result is
a diagrammatically defined additive graded monoidal category which is
equivalent to the monoidal category of Soergel bimodules. In this
paper we work almost exclusively with this category, which we denote
$\HC$ and call the Hecke category.

It is natural to try to understand parabolic Kazhdan-Lusztig
polynomials by categorifying the modules in which they live. This is
precisely what we do in this paper for the anti-spherical module.

\subsection{}
Let $(W,S)$ be a Coxeter system, and let $H$ be its Hecke
algebra over $\ZM[v^{\pm 1}]$. Let $h_x$ denote its standard basis and
$b_x$ its canonical (or Kazhdan-Lusztig) basis. Fix a subset $I \subset S$ and let
${}^I W$ denote the set of minimal coset representatives for $W_I
\setminus W$. Let $N$
denote the anti-spherical (right) $H$-module
\[
N := \mathrm{sgn}_v \otimes_{H_I} H,
\]
where $\mathrm{sgn}_v$ denotes the quantized sign representation of $H_I$, the
standard parabolic subalgebra of $H$ determined by $I$. Let $n_x$ denote the standard
basis of $N$ and $d_x$ its Kazhdan-Lusztig basis.

Recall the Hecke category $\mathcal{H}$ from above. For any $w \in W$
there exists an indecomposable self-dual object $B_w \in \HC$ parametrized by
$w$.  Any indecomposable self-dual object in $\HC$ is isomorphic to
$B_w$ for some $w \in W$.  We have a canonical isomorphism of
$\ZM[v^{\pm 1}]$-algebras
\[
H \simto [\HC]
\]
defined on generators by $b_s \mapsto [B_s]$, for all $s \in
S$. Here we have employed the following
notation: given an additive graded (with shift functor $M \mapsto M(1)$) category $\MC$, let $[\MC]$ denote its split
Grothendieck group, which we view as a $\ZM[v^{\pm 1}]$-module via
$v[M] := [M(1)]$. Note that $[\HC]$ is an algebra because $\HC$ is a
monoidal category.

Now inside $\HC$ consider $\IC$ the additive category consisting of all
direct sums of shifts of $B_x$, for $x \notin {}^I W$. It turns out
that $\IC$ is a right tensor ideal of $\HC$ (i.e. if $X \in \IC$ and $B \in
\HC$ then $XB \in \IC$). In particular, if we consider the
quotient\footnote{By quotient we mean
the following: the objects of $\NC$ are the same as those of $\HC$;
a morphism is zero in $\NC$ if and only if it factors through an
object of $\IC$.} of additive categories
\[
\NC := \HC/\IC
\]
then this is a right module category over $\HC$. We call $\NC$ the
\emph{anti-spherical category} (associated to the subset $I \subset
S$). The following theorem justifies the name:

\begin{thm}\label{thm:N}
  There is a canonical isomorphism $N \simto [\NC]$ of $\ZM[v^{\pm
    1}]$-modules. This is 
  an isomorphism of right $H$-modules via the identification $H =
  [\mathcal{H}]$. Under this isomorphism, the indecomposable self-dual
  objects in $\NC$ correspond to the Kazhdan-Lusztig basis in $N$.
\end{thm}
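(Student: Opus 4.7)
The plan is to build the isomorphism $N \simto [\NC]$ by descending the identification $H \simto [\HC]$ of Elias--Williamson \cite{EW2} through the quotient $[\HC] \to [\NC]$, and then to transport the right $H$-module structure and the duality. I would proceed in the following steps.

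First I would verify that $\IC$ is a right tensor ideal. By additivity it suffices to show $B_x B_t \in \IC$ whenever $x \notin {}^I W$ and $t \in S$. Under the identification $[\HC] = H$, the class $[B_x B_t]$ equals $b_x b_t = \sum_y \mu^t_{x,y} b_y$ in $H$; by Soergel's conjecture combined with Krull--Schmidt in $\HC$, the indecomposable summands of $B_x B_t$ are precisely the shifts of those $B_y$ for which $\mu^t_{x,y} \neq 0$. A classical computation of Deodhar \cite{De} identifies $\ker(H \twoheadrightarrow N)$, which is a right $H$-submodule, with the $\ZM[v^{\pm 1}]$-span of $\{b_x : x \notin {}^I W\}$. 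Since $b_x$ lies in this kernel, so does $b_x b_t$, forcing $\mu^t_{x,y} = 0$ for every $y \in {}^I W$, and hence $B_x B_t \in \IC$.

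Next I would identify the indecomposables in $\NC$. For $x \in {}^I W$ the image of $B_x$ in $\NC$ is nonzero, for otherwise $\id_{B_x}$ would factor through an object of $\IC$, exhibiting $B_x$ as a summand of a direct sum of shifts of $B_y$ with $y \notin {}^I W$, contradicting Krull--Schmidt in $\HC$. The ring $\End_\NC(B_x)$ is a quotient of the local ring $\End_\HC(B_x)$, hence local, so $B_x$ remains indecomposable in $\NC$. Combined with the vanishing of $B_y$ for $y \notin {}^I W$, this shows that $\{[B_x] : x \in {}^I W\}$ is a $\ZM[v^{\pm 1}]$-basis of $[\NC]$. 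The quotient functor then induces a surjective right $[\HC]$-linear map $[\HC] \to [\NC]$, which under $H = [\HC]$ kills every $b_x$ with $x \notin {}^I W$ and so factors through $N$, giving $\bar\pi : N \to [\NC]$; since both sides are free of rank $|{}^I W|$ and $\bar\pi(d_x) = [B_x]$ for $x \in {}^I W$ (using the identity $d_x = 1 \otimes b_x$ in $N$), the map $\bar\pi$ is an isomorphism of right $H$-modules. The duality on $\HC$ preserves $\IC$ (whose generators are self-dual), so it descends to $\NC$, the $B_x$ for $x \in {}^I W$ remain self-dual, and their classes match the bar-invariant Kazhdan--Lusztig basis elements $d_x$.

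The main obstacle is the first step, since it is what converts a Grothendieck-group identity into a categorical direct-sum decomposition. This rests essentially on Soergel's conjecture: without knowing that $\{[B_w]\}_{w \in W}$ is a basis of $[\HC]$ (and the accompanying positivity), one could not conclude from the vanishing of $\mu^t_{x,y}$ for $y \in {}^I W$ that $B_y$ fails to occur as a summand of $B_x B_t$. The remaining steps are then a clean transport of structure, relying only on standard properties of additive quotients of Krull--Schmidt categories and on Deodhar's basic description of $N$ inside $H$.
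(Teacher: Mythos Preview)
Your argument is correct, and it is a genuinely different route from the one in the paper. You build the isomorphism by pushing the known identification $H = [\HC]$ \emph{forward} through the quotient $[\HC] \twoheadrightarrow [\NC]$ and matching kernels, whereas the paper goes in the opposite direction: it constructs a \emph{character map} $\ch : [\NC] \to N$ via $[M] \mapsto \sum_x \grk \gHom_{\NC^{\ge x}}(M, D_x)\, n_x$, and uses the anti-spherical light leaves basis (Theorem~\ref{DLT}, which in turn rests on the localisation Theorem~\ref{Q}) to show that these graded ranks are well-defined and that $\ch$ is an isomorphism sending $[D_x]$ to $d_x$. Your proof is more economical for Theorem~\ref{thm:N} alone: it needs only Soergel's conjecture \cite{EW2}, the Hecke-algebra identity $1 \otimes b_x = d_x$ for $x \in {}^IW$, and Krull--Schmidt in $\HC$. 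The paper's approach, by contrast, produces the identification $n_{y,x} = \grk \gHom_{\NC^{\ge y}}(D_x, D_y)$ as a by-product, which is precisely what is needed for the positivity theorem (Theorem~\ref{thm:pos}) and Brenti's monotonicity (Corollary~\ref{Br}); your argument does not yield this.

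One small point you should make explicit: you assert that $\{[D_x] : x \in {}^IW\}$ is a $\ZM[v^{\pm 1}]$-basis of $[\NC]$, but you only argue that each $D_x$ is nonzero and indecomposable, not that they are pairwise non-isomorphic up to shift. This is easily fixed by the same Krull--Schmidt trick you used for non-vanishing: if $D_x \cong D_y(m)$ in $\NC$ with $(x,m) \neq (y,0)$, then $\id_{B_x} = \psi\phi + \beta\alpha$ in $\End_{\HC}(B_x)$ with $\phi,\psi$ through $B_y(m)$ and $\alpha,\beta$ through an object of $\IC$; locality of $\End_{\HC}(B_x)$ forces one of $\psi\phi$ or $\beta\alpha$ to be invertible, and either case gives a contradiction.
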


We also prove a theorem giving a (``light leaves'') basis for the morphisms
between certain additive 
generators of $\NC$ (see Theorem \ref{DLT}). From this the
positivity of the corresponding parabolic Kazhdan-Lusztig polynomials
(Theorem \ref{thm:pos}) is an easy consequence. We also deduce (see Corollary \ref{Br}) from
these results a proof of a conjecture of Brenti \cite{M} on the
monotonicity of parabolic Kazhdan-Lusztig polynomials associated to
increasing subsets $I \subseteq J \subseteq S$.

\subsection{} We were also motivated in our study of the anti-spherical
category by representation theory. If $W$ is the Weyl group of a
complex semi-simple Lie algebra, the
anti-spherical category can be used to give a graded deformation of
parabolic category $\mathcal{O}$ (the subset $I \subset S$ is determined by the
parabolic subgroup appearing in the definition of parabolic category
$\mathcal{O}$).  This fact does not seem to be available explicitly in the
literature, however the papers \cite{Str} and \cite{KMS} contain results which are quite close.

The anti-spherical category is also important in modular
representation theory. 
Riche and the second author conjectured that the Hecke category acts
via translation functors on the principal block of representations of
an algebraic group \cite{RW}. This conjecture was proved in \cite{RW}
for $GL_n$, and has recently been established in general by
Bezrukavnikov-Riche \cite{BezRicheAction} and Ciappara
\cite{Ciappara}. Thus  the anti-spherical category sees all of the
(extremely 
subtle) representation theory of connected reductive algebraic
groups. (These developments were heavily motivated by earlier work of
Soergel \cite{SoeKL} and Arkhipov-Bezrukavnikov \cite{AB}.) In a
parallel development, Elias and Losev \cite{ELo} explained that one can use
singular Soergel bimodules to construct the categories of polynomial
representations of $GL_n$ together with the action of certain natural
endofunctors, in a purely combinatorial way. Their work provides further
evidence for the importance of the anti-spherical category in
modular representation theory. 

In \cite{RW} (the obvious analogue of) Theorem $\ref{thm:N}$ is proved
for the anti-spherical module of an affine Weyl group. (The
parabolic subgroup is taken to be the finite Weyl group.) The 
proofs there rely on geometry or representation theory in a crucial
way. One of the main motivations for the current work was to give
purely algebraic proofs of these basic statements, which work for any
Coxeter system. The proofs of the current paper involve
quite different technology than those of \cite{RW} and are simpler and
more general. 

\subsection{}
Another consequence of the conjectures of \cite{RW} is a character
formula for simple modules and indecomposable tilting modules for
reductive algebraic groups in characteristic $p$ in terms of the
$p$-canonical basis of the anti-spherical module. This conjecture was
first proved by Achar, Makisumi, Riche and the second author \cite{AMRW1,
  AMRW2}, and has recently be proved in greater generality\footnote{i.e. for
all weights, and all $p$} by Riche
and the second author \cite{RWST}. The paper \cite{ELo} of Elias and Losev
has related results for $GL_n$. 

The upshot is that the $p$-canonical basis in the anti-spherical
module contains the answers 
to several deep mysteries in the representation theory of algebraic
groups. However it is still not easy to compute. The third main
theorem of this paper (Theorem \ref{Q})   heuristically says that   the
localisation of the anti-spherical category is ``as simple as
possible''. This was unexpected for the authors because general cell
quotients of the Hecke category can have complicated endomorphism
rings (for a detailed example, see Elias' Temperley-Lieb quotient of
the Hecke algebra \cite{EliasTL}).

Theorem \ref{Q} is the technical heart
of the paper. It also forms the foundation for an effective algorithm to calculate the
$p$-canonical basis.
The basic idea is that via localisation one can reduce
calculations of the $p$-canonical basis in the anti-spherical module
(which can be performed via diagrammatics, as explained in \cite{JW}) 
to certain linear algebra problems over a polynomial ring (the ring denoted $R_I$ in \S \ref{sec:coinv}). 
In the special case  of an affine Weyl group,
with parabolic subgroup the finite Weyl group, the ring $R_I$ has one variable, but in general
it might have several variables.
This algorithm has been further developed and implemented by the
second author, Jensen and Gibson 
to provide a powerful new means to calculate characters of tilting
modules, and hence decomposition numbers for symmetric groups \cite{GJW}.
This produced new data that was key for  the production of the
``billiards conjecture'' by  Lusztig and the second author
\cite{LW}.


\subsection{} We conclude this introduction with a remark on positive
characteristic. In the body of this paper we work over a field of
characteristic zero. This is because our results rely crucially on
the so-called parabolic property of root systems (see \eqref{pp}),
which often fails for reflection representations of Coxeter groups in
positive characteristic. The parabolic property ensures that Theorem \ref{Q}  holds. It is an
interesting question as to what happens if one localises in settings
in which the parabolic property fails (as is the case for the
important example of the natural representation of affine Weyl groups
in characteristic $p$). At the time this paper was written,
  this question was mysterious to the authors. However, in the
  meantime there have been considerable advances in understanding this
  question, both from the algebraic side in the work of Hazi \cite{Hazi}, and on
  the geometric side via Smith-Treumann theory \cite{Treumann, LL,
    RWST}. The relations between these two theories, as well as why
  Smith-Treumann theory is relevant for describing
  localizations of the Hecke category in characteristic $p$ is explained in \cite{WilCDM}.

Finally, let us remark that one can still apply the techniques of
this paper to settings in positive characteristic by using the
$p$-adic integers in place of a field of characteristic $p$. This is
one of the basic ideas in the algorithm mentioned in the last
paragraph.

\subsection{Acknowledgements:} 
This paper owes an intellectual debt to
ideas of R. Bezrukavnikov and S. Riche. We would like to thank them
both. We would also like to
thank B. Leclerc for pointing out \cite{KT}.
Finally we would like to thank B.~Elias, E.~Gorsky, J.~Gibson, A.~Hazi, T.~Jensen and
P.~Sentinelli for interesting discussions and detailed comments on
various versions of this paper. The second author  was funded by ANID project Fondecyt regular
$1200061$.

\subsection{Note to the reader:}  A previous version of this article
(available on the arxiv) took a significantly more complicated route to our main
theorem, by exploiting properties of the infinite twist. This approach
contained gaps, pointed out by two referees. Whilst we believe that
our original approach still works, the referees' questions lead us to the
simplified proof presented here. We are very grateful to both referees. We remain interested in the possibilities of the infinite
twist, but omit discussion of it here. The authors learned much about the infinite twist from discussions with
M.~Hogencamp.

\section{Parabolic Kazhdan-Lusztig polynomials}

\subsection{The Hecke algebra} We follow the notation of \cite{SoeKL}. Let $(W,S)$ be a Coxeter system and $(m_{sr})_{s,r \in S}$ its Coxeter matrix. Let $l:W\rightarrow \mathbb{N}$ be the corresponding length function and $\leq$ the Bruhat order on $W$. Let $\mathcal{L}=\mathbb{Z}[v^{\pm1}]$ be the ring of Laurent polynomials with integer coefficients in one variable $v$.

The \emph{Hecke algebra} ${H}=H(W,S)$ of a Coxeter system $(W,S)$ is the associative algebra over $\mathcal{L}$ with generators $\{h_s\}_{s\in S}$, quadratic relations $(h_s+v)(h_s-v^{-1})=0$ for all $s\in S$, and  braid relations $h_sh_rh_s\cdots=h_rh_sh_r\cdots$ with $m_{sr}$ elements on each side for every couple $s, r \in S.$

Consider $x\in W.$ To a reduced expression $sr\cdots t$ of $x$ one can
associate the element $h_sh_r\cdots h_t\in H.$   It was proved by
H. Matsumoto that this element is independent of the choice of reduced
expression of $x$, and we call it $h_x.$ N. Iwahori proved that $$H=\bigoplus_{x\in W}\mathcal{L} h_x,$$
and $h_xh_y=h_{xy}$ if $l(x)+l(y)=l(xy)$ (which is clear by Matsumoto's Theorem).

Let us define the element $b_s=h_s+v.$ The right regular action of $H$ is given by the formula:
\begin{equation}\label{rightH}
  h_xb_s = \left\{
    \begin{array}{cl}
      h_{xs}+vh_x & \text{if } x<xs; \\
      h_{xs}+v^{-1}h_x & \text{if } x>xs .
          \end{array} \right.
\end{equation}

 \subsection{Parabolic subgroups}\label{ps}

 Consider $I\subset S$ an arbitrary subset and $W_I$ its corresponding  Coxeter group, which identifies naturally as a subgroup of $W$. We say that $W_I$ is the \emph{parabolic subgroup} corresponding to $I$. We say that a sequence $\underline{w}$ of elements in $S$ is an $I$-\emph{sequence} if it starts with some element $s\in I$.

 We denote by $^I\hspace{-0.03cm}W\subseteq W$ the set of minimal coset representatives in $W_I \hspace{-.1cm} \setminus \hspace{-.1cm}W.$  The following two descriptions of this set will be useful for us:
\begin{equation}\label{description1}^I\hspace{-0.03cm}W=\{w\in W \, \vert \, sw>w \text{ for all } s\in I \}  \text{;}
\end{equation}
\begin{equation}\label{description2}
^I\hspace{-0.03cm}W=\{w \in W \, \vert \, \text{ no reduced expression of } w \text{ is an } I\text{-sequence}\}.
\end{equation}

\begin{Eg} Let $W$ be the symmetric group $W=S_{8}$ with simple reflections $s_1, s_2,\ldots, s_7.$ For simplicity  we will just denote $s_k$ by $k$, so by  $343$ we mean the element $s_3s_4s_3\in W$. Let us define the set  $$\underrightarrow{54321}:= \{\emptyset, 5,54,543,5432,54321\}\subseteq W.$$ We define in the same way the set $\underrightarrow{k\ldots 321}$ for any natural number $k$. The order of this set is $k+1$. 

Say that $I=\{1,2,3\}.$ Then $W_I$ and $^I\hspace{-0.03cm}W$ are the following products of sets
 $$W_I=\underrightarrow{1}\ \underrightarrow{21}\ \underrightarrow{321}\ \   \text{ (it has order $2\cdot 3\cdot 4=24$) and}$$
$$ ^I\hspace{-0.03cm}W=\underrightarrow{4321}\ \underrightarrow{54321}\ \underrightarrow{654321}\ \underrightarrow{7654321} \ \text{(it has order } 5\cdot 6\cdot 7\cdot 8=1680).
$$
For example, $1\,21\,32\in W_I$ and $43\, 543\, 6\, 765432\in  ^I\hspace{-0.13cm}W$.

\end{Eg}
We see in this example (if one recalls the normal form of an element in the symmetric group) that multiplication defines an isomorphism of sets 
\begin{equation}\label{mult}
W\cong  W_I \times^I\hspace{-0.09cm}W
\end{equation} 
satisfying that, if $x\in W_I$ and $y\in \, ^I\hspace{-0.001cm}W,$ then $l(xy)=l(x)+l(y)$.
Deodhar \cite{De} proved that this is true for any Coxeter system and any parabolic subgroup.  
%

\subsection{Parabolic Property}\label{pp}
Let $\mathfrak{h}$ be the ``dual geometric representation'' of $W$ (see Section \ref{real}). Let $\Delta_I:=\{\alpha_r\}_{r\in I}\subset \hg^*$  and let $\Phi_I:=W_I\cdot \Delta_I$ be the root system spanned by $\Delta_I.$ Another  important property of minimal coset representatives is what we will call the \emph{Parabolic Property}:

  If $x\in ^I\hspace{-0.13cm}W$ and $s\in S,$ then   $$xs\notin {}^IW \Iff x(\alpha_s)\in \Phi_I.$$

\emph{Proof.}

\begin{itemize}
\item We first prove $\Rightarrow$
   If $x\in {}^IW$ and $xs\notin {}^IW$ then $xs=rx$ for some $r\in I.$
This comes from the more general (and beautiful) fact \cite[\S 3]{SoeKL} that if $x$ is
any element of $W$ and $s,r\in S$, the two inequalities $rx>x$ and
$rxs<xs$ imply that $rxs=x.$ Hence $x(\alpha_s)=rxs(\alpha_s)$. Thus we obtain the equality $r(x(\alpha_s))=-x(\alpha_s).$ This implies that $x(\alpha_s)=\alpha_{r}$ or $x(\alpha_s)=-\alpha_{r}$. 


\item Now we prove that ${x(\alpha_s)\in \Phi_I\Rightarrow xs\notin {}^IW}$.  As $x(\alpha_s)\in \Phi_I,$ we know by the Lemma in  \cite[\S $5.7$]{Hum} 
that $xsx^{-1}=t \in W_I$ with $t$ the reflection satisfying $x(\alpha_s) = \alpha_t$.  Rewriting this equation we have $xs=tx.$ Bijection \ref{mult} implies that $xs\notin {}^IW$. $\hfill \Box$
\end{itemize}

 \subsection{Spherical and anti-spherical modules}
 
We base the exposition and notations of the next sections in \cite{SoeKL}. Consider $I\subset S$ and the Hecke algebra $H_I:=H(W_I,I)$. 
By the relations defining the Hecke algebra, if we fix $u\in \{-v,v^{-1}\}$, we can define a surjection of $\mathcal{L}$-algebras $$\varphi_u:H_I\twoheadrightarrow \mathcal{L}$$  by sending $h_s\mapsto u$ for all $s\in I$. Thus $\mathcal{L}$ becomes an $H_I$-bimodule which we denote by $\mathcal{L}(u)$. We can induce from it to produce the following right $H$-modules:
$$N=N(W,S,I)=\mathcal{L}(-v)\otimes_{H_I}H, \text{ the } \textit{anti-spherical module};$$
$$M=M(W,S,I)=\mathcal{L}(v^{-1})\otimes_{H_I}H, \text{ the } \textit{spherical module}.$$

If $n_x:=1\otimes h_x\in N$ and $m_x:=1\otimes h_x\in M$, then we have that $$N=\bigoplus_{x\in ^I\hspace{-0.03cm}W} \mathcal{L}n_x  \ \ \text{ and } \ \  M=\bigoplus_{x\in ^I\hspace{-0.03cm}W} \mathcal{L}m_x.$$
 
 We will not prove this result but we will explain why it is reasonable. 
 Equality (\ref{description1}) tells us that if $x\notin ^I\hspace{-0.13cm}W$, then there is $r\in I$ such that $rx<x$, so   $n_x=-vn_{rx}.$ In this way we see that the set $\{n_x\}_{x\in ^I\hspace{-0.03cm}W}$ generates $N$ over $\mathcal{L}$ (a similar result holds for $M$).
 
 \subsection{Right action of the Hecke algebra.}\label{ra}
 
 The right action of $H$ on the anti-spherical and on the spherical modules (compare with the regular action (\ref{rightH})) is given by the formulas 
\begin{equation}\label{rightactionN}
  n_xb_s = \left\{
    \begin{array}{cl}
      n_{xs}+vn_x & \text{if } x<xs  \text{   and   }xs\in ^I\hspace{-0.12cm}W; \\
      n_{xs}+v^{-1}n_x & \text{if } x>xs \text{ and } xs\in ^I\hspace{-0.12cm}W; \\
      0 &\text{if } xs\notin ^I\hspace{-0.12cm}W.
    \end{array} \right.
\end{equation}

\begin{equation}
  m_xb_s = \left\{
    \begin{array}{cl}
      m_{xs}+vm_x & \text{if } x<xs  \text{   and   }xs\in ^I\hspace{-0.12cm}W; \\
      m_{xs}+v^{-1}m_x & \text{if } x>xs \text{ and } xs\in ^I\hspace{-0.12cm}W; \\
      (v+v^{-1})m_x & \text{if } xs\notin ^I\hspace{-0.12cm}W.
    \end{array} \right.
\end{equation}

Let us explain these formulas for the anti-spherical module. Similar arguments work in the spherical case. The  first two equations of (\ref{rightactionN}) are an easy consequence of (\ref{rightH}). The third equation of (\ref{rightactionN})
is a consequence of the following three facts: 
\begin{enumerate}
\item[(a)] $\varphi_{-v}(b_s)=0$ for $s\in I.$
\item[(b)]  If $x\in ^I\hspace{-0.12cm}W$ and $xs\notin ^I\hspace{-0.12cm}W$ then $xs=rx$ for some $r\in I.$
\item[(c)] If $x\in ^I\hspace{-0.12cm}W$ and $xs\notin ^I\hspace{-0.12cm}W$ then $xs>x$.
\end{enumerate}
Fact (a) is trivial. We have already seen facts (b) and (c) in \S \ref{pp}.

 \subsection{Kazhdan-Lusztig bases}
 
There is a unique ring homomorphism  $h \mapsto \overline{h}$ on $H$ such that $\overline{v}=v^{-1}$ and $\overline{h_x}=(h_{x^{-1}})^{-1}.$ Recall that $b_s=h_s+v.$ If $s\in I$ we have that $\varphi_{-v}(b_s)=0$ and  $\varphi_{v^{-1}}(b_s)=(v+v^{-1}).$ In any case $\varphi_u(\overline{b_s})=\overline{\varphi_u(b_s)}$ so, since the set $\{b_s\}_{s\in S}$ generates $H_I$ as an $\mathcal{L}$-algebra, we have 
\begin{equation}\label{hi}
\varphi_u(\overline{h_I})=\overline{\varphi_u(h_I)} \text{ for any element } h_I\in H_I.
\end{equation}
We also denote by $\overline{(-)}$ the involution of $\mathcal{L}$ given by $v\mapsto v^{-1}$.
Using equation (\ref{hi}), we can induce the morphism $\overline{(-)}$ to a
morphism of additive groups $\overline{(-)}:N\rightarrow N$ given by
$l\otimes h\mapsto \overline{l}\otimes \overline{h}$. In the same way
we can induce a morphism of additive groups
$\overline{(-)}:M\rightarrow M$. We will call an element
\emph{self-dual} if it is invariant under $\overline{(-)}$.

We can now state the central theorem of Kazhdan-Lusztig theory and its parabolic versions. 

\begin{thm}\label{KL}
\begin{enumerate}
\item(\cite{KLPolynomials}) For every element $x\in W$ there is a unique self-dual element $b_x\in {H}, $  such that  ${b}_x\in h_x+\sum_{y\in W}v\mathbb{Z}[v]h_y.$ 
\item(\cite{De}) For every element $x\in ^I\hspace{-0.22cm}W$ there is a unique self-dual element $c_x\in M, $  such that  $c_x\in m_x+\sum_{y\in ^I\hspace{-0.03cm}W}v\mathbb{Z}[v]m_y.$
\item(\cite{De}) For every element $x\in ^I\hspace{-0.22cm}W$ there is a unique self-dual element $d_x\in N, $  such that  $d_x\in n_x+\sum_{y\in ^I\hspace{-0.03cm}W}v\mathbb{Z}[v]n_y.$
\end{enumerate}
 \end{thm}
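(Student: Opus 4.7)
The plan is to prove all three parts simultaneously by a single Kazhdan--Lusztig style induction on $\ell(x)$, the only differences being bookkeeping. I will write the argument for part (3) (the anti-spherical case), since parts (1) and (2) are analogous or classical.

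For \emph{uniqueness}: I would first establish the elementary ``upper triangularity'' of the bar involution on $N$ in the standard basis, namely that for each $x \in {}^I W$ one has $\overline{n_x} = n_x + \sum_{y < x,\; y \in {}^I W} a_{y,x}(v)\, n_y$ for some $a_{y,x} \in \mathcal{L}$. (This follows by expanding $n_x = 1 \otimes h_x$, using $\overline{h_x} = h_x + \sum_{y<x} \ast \cdot h_y$ in $H$, and then rewriting each $n_y$ with $y \notin {}^I W$ in terms of basis elements via the recursion $n_y = -v n_{ry}$ when $ry < y$, $r \in I$, coming from \eqref{description1}.) Given this, if $d, d' \in N$ are both self-dual and lie in $n_x + \sum_{y \in {}^I W} v\ZZ[v]\, n_y$, then $d - d'$ is self-dual and lies in $\sum_{y < x,\, y \in {}^I W} v\ZZ[v]\, n_y$. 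Picking a maximal $y$ with non-zero coefficient $p_y \in v\ZZ[v]$ and reading off the coefficient of $n_y$ in $\overline{d-d'} = d-d'$ (which depends only on terms indexed by $z \geq y$) forces $p_y = \overline{p_y}$, hence $p_y = 0$, a contradiction.

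For \emph{existence}: I induct on $\ell(x)$. The base case $x = e$ is $d_e := n_e$. For the inductive step, pick a reduced expression $x = s_1 \cdots s_n$ and set $y := s_1 \cdots s_{n-1}$, $s := s_n$. By description \eqref{description2}, $y \in {}^I W$; by hypothesis $d_y$ is already defined. Consider the element $d_y b_s \in N$, which is self-dual since both factors are. I then expand $d_y = n_y + \sum_{z < y} p_{z,y}\, n_z$ (sum over $z \in {}^I W$, with $p_{z,y} \in v\ZZ[v]$) and apply the right-action formula \eqref{rightactionN} termwise. The leading term $n_y \cdot b_s$ contributes $n_x + v\, n_y$ (we are in the first case of \eqref{rightactionN}, since $y < ys = x \in {}^I W$), and every other summand $p_{z,y}\, n_z b_s$ produces terms supported on $\{n_z, n_{zs}\} \cap N$, both of which are $\le x$ in Bruhat order by the lifting property (applied to $y < ys$). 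Therefore
\[
d_y b_s = n_x + \sum_{w < x,\; w \in {}^I W} q_w(v)\, n_w, \qquad q_w \in \mathcal{L}.
\]
Now I ``clean up'' the coefficients by descending induction on $w < x$: using self-duality of $d_y b_s$ and the upper-triangularity of the bar involution on $N$, the coefficient $q_w$ modulo higher terms satisfies $\overline{q_w} = q_w$; write $q_w = p_w + \overline{p_w} + c_w$ with $p_w \in v\ZZ[v]$ and $c_w \in \ZZ$, and subtract $(c_w + \overline{p_w})\, d_w$ (which exists and is self-dual by induction). Iterating, the result has the desired form $n_x + \sum_{w < x,\, w \in {}^I W} v\ZZ[v]\, n_w$, and we call it $d_x$.

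The one subtle point---and the main thing to watch---is to ensure the induction stays inside ${}^I W$ and produces an element supported on ${}^I W$. This is handled by facts (a), (b), (c) recalled in \S~\ref{ra}: the third branch of \eqref{rightactionN} simply kills any term that would have escaped ${}^I W$, and the choice $y = s_1\cdots s_{n-1}$ lies in ${}^I W$ by \eqref{description2}, so $d_y$ is available by induction. Parts (1) and (2) are handled by the same scheme: for (1) one replaces \eqref{rightactionN} by \eqref{rightH} and works throughout $H$, and for (2) one uses the spherical right-action formula (where the third case $xs \notin {}^I W$ now gives a nonzero scalar, but this does not affect the essential support argument, since $(v+v^{-1})m_x$ has leading term $v^{-1} m_x$ which again lies below $x$ in the required sense after the bar-involution cleanup).
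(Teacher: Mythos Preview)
Your strategy is exactly the standard Soergel induction that the paper invokes (the paper gives no proof of its own here, only a pointer to \cite{SoeKL}), and the uniqueness argument as well as the setup for existence (self-duality of $d_y b_s$, support on $\{w \le x\}$ via the lifting property) are fine.

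The cleanup step, however, does not work as written. You assert that at each stage the running coefficient satisfies $\overline{q_w}=q_w$, but comparing the $n_w$-coefficients in $D=\overline{D}$ picks up contributions from $\overline{n_x}$ and from every $\overline{n_{w'}}$ with $w<w'<x$, so this equality is not available. More importantly, the correction you subtract, namely $(c_w+\overline{p_w})\,d_w$, is not bar-invariant (its bar is $(c_w+p_w)\,d_w$), so the running element ceases to be self-dual after the first subtraction; you then never verify that the final element is self-dual, which is half of what has to be shown.

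The repair is the key observation in Soergel's argument: from \eqref{rightactionN} and $d_y\in n_y+\sum_{z<y} v\ZM[v]\,n_z$ one sees directly that every coefficient $q_w$ of $d_y b_s$ already lies in $\ZM[v]$ (the only potentially negative power comes from the second branch $v^{-1}p_{z,y}\,n_z$, but $p_{z,y}\in v\ZM[v]$; the analogous check with $(v+v^{-1})p_{z,y}$ handles the spherical case). Rewriting in the inductively available $d$-basis, $d_y b_s=n_x+\sum_{w<x}\mu_w d_w$ with $\mu_w\in\ZM[v]$, one sets $d_x:=d_y b_s-\sum_w \mu_w(0)\,d_w$. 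This is a $\ZM$-linear combination of self-dual elements, hence self-dual, and visibly lies in $n_x+\sum_{w<x} v\ZM[v]\,n_w$.

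One small side remark: your appeal to \eqref{description2} for $y\in{}^IW$ only shows that \emph{one} reduced expression of $y$ is not an $I$-sequence, not that none is. The precise statement you want is Fact~(c) of \S\ref{ra}: if $x\in{}^IW$ and $xs<x$ then $xs\in{}^IW$.
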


The sets $\{b_x\}_{x\in W}$, $\{c_x\}_{x\in ^I\hspace{-0.03cm}W}$ and $\{d_x\}_{x\in ^I\hspace{-0.03cm}W}$ are bases of the corresponding $H$-modules, and are called the \emph{Kazhdan-Lusztig bases}.  For each couple of elements $x,y\in W$ we define $h_{y,x}\in \LC$ by the formula $$b_x=\sum_yh_{y,x}h_y.$$ 
For each couple of elements  $x,y\in ^I\hspace{-0.16cm}W$  we define $m_{y,x}\in \LC$ and $n_{y,x}\in \LC$ by the formulae
$$c_x=\sum_{y\in ^I\hspace{-0.03cm}W}m_{y,x}m_y \ \ \text{ and }\ \ d_x=\sum_{y\in ^I\hspace{-0.03cm}W}n_{y,x}n_y.$$
(If we need to specify the  set $I$, we will write $m^I_{y,x}$ for $m_{y,x}$ and $n^I_{y,x}$ for $n_{y,x}$.)
The proof of Theorem \ref{KL} (as given by Soergel in \cite{SoeKL}) is short and easy. It constructs the Kazhdan-Lusztig basis   inductively on the length of $x$.

The Kazhdan-Lusztig polynomials (as defined in \cite{KLPolynomials}) are given by the formula $P_{y,x}=
(v^{l(y)-l(x)})h_{y,x}$ and they are polynomials in $q:=v^{-2}$. The same normalization gives Deodhar's parabolic polynomials. More precisely $(v^{l(y)-l(x)})m_{y,x}$ and $(v^{l(y)-l(x)})n_{y,x}$ are the polynomials $P^I_{y^{-1},x^{-1}}$ defined by Deodhar in \cite{De} in the cases $u=-1$ and $u=q$, respectively. 

\subsection{Some relations between these polynomials}\label{Relations}
\begin{enumerate}
\item In the case $I=\emptyset$ we have $H=M=N$, $b_x=c_x=d_x$ and $h_{y,x}=m_{y,x}=n_{y,x}$. Thus the theory of parabolic Kazhdan-Lusztig polynomials contains the theory of Kazhdan-Lusztig polynomials. 
\item If $I$ is finitary (i.e. $W_I$ is finite) then Deodhar \cite{De}
  proves that the $m$ polynomials are instances of Kazhdan-Lusztig
  polynomials. More precisely, he proves that if $w_0$ is the longest
  element of $W_I$ then $m_{y,x}=h_{w_0y,w_0x}.$ Moreover, $M$ is a
  sub-$H$-module of $H$ compatible with the duality.  

This result was expected. Parabolic Kazhdan-Lusztig polynomials
calculate (and this is their main reason to exist) the dimensions of
the intersection cohomology modules of Schubert varieties in $G/P$
where $G$ is a Kac-Moody group and $P$ is a standard
parabolic. Kazhdan-Lusztig polynomials calculate those dimensions in
the case of the flag variety $G/B$. When  $G$ is a semi-simple or
affine Kac-Moody group (and thus the parabolic subgroup of the Weyl
group of $G$ corresponding to $P$ is finite) one problem reduces to
the other, because one has a smooth fibration $G/B\rightarrow G/P$. 

 \item 
For arbitrary $I$ and $x,y \in  ^I\hspace{-0.12cm}W,$ Deodhar \cite{De} proved the formula $$n_{y,x}=\sum_{z\in W_I}(-v)^{l(z)}h_{zy,x}.$$
This  follows from the facts that, if $\pi$ is the obvious surjection $\pi: H\twoheadrightarrow N$ and $w=xy$ is the decomposition with $x\in W_I$ and $y\in ^I\hspace{-0.12cm}W,$ then we have $$\pi(h_{w})=(-v)^{l(x)}n_{y} \ \text{and}\ \pi(b_y)=d_y.$$ 
So, summarizing, $M$ is sometimes a good sub-object and $N$ is always a good quotient of   $H$ (seen as an $H$-module).
\item If $J\subseteq I$, then for all $y,x \in\,  ^I\hspace{-0.03cm}W$  $n_{y,x}^I\leq n_{y,x}^J$ (where $\le$
  denotes coefficientwise inequality). This is known
  as \emph{Brenti's monotonicity conjecture}. This conjecture was  stated by Francesco Brenti
in 2008 at the Conference ``Festive Combinatorics, Symposium in honor of Anders Bj\H{o}rner's 60th Birthday''.  We prove it in this paper (see Corollary \ref{Br})
  as a consequence of our main theorem.
\end{enumerate}


\section{The categories $\HC$, $\NC$ and $_Q\hspace{-0.03cm}\NC$} 
In this  section we define the Hecke category (denoted by $\HC$), the diagrammatic anti-spherical category (denoted by $\NC$) and a localization (denoted by $_Q\hspace{-0.03cm}\NC$). For the Hecke category we follow  the exposition given in \cite[\S 2.5-2.7]{HW}.
\subsection{Realizations} \label{real}
Recall that  a realization, as defined in \cite[\S 3.1]{EW}  consists of a commutative ring $\Bbbk$ and a
free and finitely generated $\Bbbk$-module $\hg$ together with subsets
\[
\{ \a_s \}_{s \in S} \subset \hg^* \quad \text{and} \quad \{ \a^\vee_s \}_{s \in S} \subset \hg
\]
of ``roots'' and ``coroots'' such that $\langle \alpha_s^\vee,
\alpha_s \rangle = 2$ for all $s \in S$, such that
  the formulas 
\[
s(v) := v - \langle v,\alpha_s  \rangle \alpha^\vee_s \hspace{.5cm} \mathrm{for }\  s \in S\ \mathrm{and }\ 
v \in \hg,
\]
define an action of $W$ on $\hg$ and such that a technical  condition on 2-colored quantum numbers (condition (3.3) in \cite[\S 3.1]{EW}) is satisfied.

Unless otherwise stated we will assume in this paper that $\hg$ is a realization where the parabolic property holds and such that the simple roots $\{
\alpha_s \} \subset \hg^*$ are linearly independent. Our basic example  of this is when 
$\Bbbk =
\RM$ and $\hg$ is the ``dual geometric representation'' of $W$, 
i.e.  we first choose a vector space $\mathfrak{h}$ with
$\mathfrak{h}^* = \bigoplus_{s \in S} \RM \alpha_s$, and then define
the elements $\{ \alpha^\vee_s \}_{s \in S} \subset \hg$ by the
equations
\begin{equation} \label{eq:justcos}
\langle \alpha_t^\vee, \alpha_s \rangle = -2\cos(\pi/m_{st})
\end{equation}
(by convention $m_{ss} =1$ and $\pi/\infty = 0$). Note that the subset
$\{ \alpha^\vee_s \}_{s \in S} \subset  \hg$ is linearly independent if and only if $W$ is finite (see the Theorem in \S 6.4 of \cite{Hum}). One can prove that the technical condition on 2-colored quantum numbers mentioned above is satisfied in this case using the analogue result for the geometric representation, because the quantum numbers of both realizations  agree.


Let  $R =S(\hg^*)$ be the ring of regular functions on $\hg$ or, equivalently, the symmetric algebra of $\hg^*$ over
$\Bbbk$. We see $R$ as a graded $\Bbbk$-algebra by declaring  $\deg \hg^*  =
2$. The action of  $W$ on $\hg^*$ extends to $R$ by functoriality.
For any $s \in S$, let  $\partial_s :
R \to R[-2]$ be the \emph{Demazure operator} defined by the formula
\[
\partial_s(f) = \frac{f - sf}{\alpha_s}.
\]
In \cite[\S
3.3]{EW} it is proved that this is well defined under our assumptions.

\subsection{Towards the morphisms in $\HC_{\mathrm{BS}}$} \label{subsec:diags}
An {\it $S$-graph} is a finite,  planar, decorated
graph with boundary properly embedded in the planar strip $\mathbb R \times [0,1]$.
Its edges are colored by $S$. The vertices in this graph are of 3 types:
\begin{enumerate}
\item univalent vertices (``dots''):
 $ \begin{array}{c}
\tikz[scale=0.5]{\draw[dashed] (3,0) circle (1cm);
\draw[color=blue] (3,-1) to (3,0);
\node[circle,fill,draw,inner sep=0mm,minimum size=1mm,color=blue] at (3,0) {};}
\end{array}$
\item trivalent vertices:
  $\begin{array}{c}
\tikz[scale=0.5]{\draw[dashed] (0,0) circle (1cm);
\draw[color=blue] (-30:1cm) -- (0,0) -- (90:1cm);
\draw[color=blue] (-150:1cm) -- (0,0);}
\end{array}$
\item $2m_{rb}$-valent vertices:
$\begin{array}{c}\tikz[scale=0.5,rotate=-11]{\draw[dashed] (0,0) circle (1cm);
\draw[color=blue] (0,0) -- (22.5:1cm);
\draw[color=blue] (0,0) -- (67.5:1cm);
\draw[color=blue] (0,0) -- (112.5:1cm);
\draw[color=blue] (0,0) -- (157.5:1cm);
\draw[color=blue] (0,0) -- (-22.5:1cm);
\draw[color=blue] (0,0) -- (-67.5:1cm);
\draw[color=blue] (0,0) -- (-112.5:1cm);
\draw[color=blue] (0,0) -- (-157.5:1cm);
\draw[color=red] (0,0) -- (0:1cm);
\draw[color=red] (0,0) -- (45:1cm);
\draw[color=red] (0,0) -- (90:1cm);
\draw[color=red] (0,0) -- (135:1cm);
\draw[color=red] (0,0) -- (180:1cm);
\draw[color=red] (0,0) -- (-45:1cm);
\draw[color=red] (0,0) -- (-90:1cm);
\draw[color=red] (0,0) -- (-135:1cm);
}\end{array}$
\\
We require that 
there are exactly
$2m_{{\color{red}r}{\color{blue}b}}<\infty$ edges originating from the vertex. They alternate in color between two different elements 
$r,b\in S$ around the vertex. The
pictured example has $m_{{\color{red}r}{\color{blue}b}} = 8$.
\end{enumerate}
Additionally any
$S$-graph may have its regions (the connected components of the complement
of the graph in $\mathbb R \times [0,1]$) decorated by boxes
containing homogenous elements of $R$.

The following is an example of an $S$-graph with $m_{{\color{blue}b},
  {\color{red}r}} = 5$, $m_{{\color{blue}b},
  {\color{green}g}} = 2$, $m_{{\color{green}g},
  {\color{red}r}} = 3$:
\[
\begin{tikzpicture}[scale=0.8]
  \coordinate (b) at (2.7,1);
  \coordinate (a) at (2,2);
\coordinate (c) at (3.7,2.5);
  \coordinate (d) at (0.5,3);
\coordinate (dd) at (-0,2);
  \coordinate (e) at (3,3.5);
  \coordinate (ed) at (3.5,3);
\coordinate (f) at (4.8,3.4);
\coordinate (z0) at (0.6,2.3);
\coordinate (z1) at (0.6,1.8);

\node[rectangle,draw,scale=0.8] at (1.5,0.5) {$f$};
\node[rectangle,draw,scale=0.8] at (4.8,2.5) {$g$};

\draw[blue] (0,0) to[out=90,in=-162] (a);\draw[blue] (2,0) to (a);
\draw[blue] (4,0) to[out=90,in=-18] (a);  
\draw[blue] (a) to[out=126,in=-90] (1,4);\draw[blue] (a) to[out=54,in=-120] (e);
\draw[blue] (e) to (3,4);
\draw[blue] (ed) to[out=120, in=-60] (e);
 \node[circle,fill,draw,inner
      sep=0mm,minimum size=1mm,color=blue] at (ed) {};

\draw[red] (1,0) to [out=90,in=-126] (a) to[out=162,in=-60] (d) to[out=90,in=-90] (0,4);
\draw[red] (2.5,0) to[out=90,in=-120] (b) to[out=90,in=-54] (a) to[out=90,in=-90] (2,4);
\draw[red] (3,0) to[out=90,in=-60] (b);
\draw[red] (a) to[out=18,in=-150] (c) to[out=90,in=-90] (4,4);
\draw[red] (5,0) to[out=90,in=-30] (c);
\draw[red] (dd) to[out=60, in=-120] (d);
 \node[circle,fill,draw,inner
      sep=0mm,minimum size=1mm,color=red] at (dd) {};

\draw[green] (4.5,0) to[out=90,in=-90] (c) to[out=150,in=-90] (2.5,4);
\draw[green] (c) to[out=30,in=-120] (f) to[out=90,in=-90] (4.7,4);
\draw[green] (f) to (5.5,4);
\draw[green] (z0) to (z1);
 \node[circle,fill,draw,inner
      sep=0mm,minimum size=1mm,color=green] at (z0) {};
 \node[circle,fill,draw,inner
      sep=0mm,minimum size=1mm,color=green] at (z1) {};
\draw[blue] (5.5,1) to[out=135,in=-90] (5,1.5) to[out=90,in=180] (5.5,2) to[out=0,in=90] (6,1.5)
to[out=-90,in=45] (5.5,1) to[out=-90,in=90] (5.5,0.5);
 \node[circle,fill,draw,inner
      sep=0mm,minimum size=1mm,color=blue] at (5.5,0.5) {};
\end{tikzpicture}
\]
where $f$ and $g$ are homogeneous polynomials in $ R$.

The \emph{degree} of an $S$-graph is the sum over the degrees of
its vertices and boxes. Each box has degree equal to the degree of the
corresponding element of $R$. The vertices have degrees given by
the following rule: dots have degree $1$, trivalent vertices have degree
$-1$ and $2m$-valent vertices have degree 0. For example, the degree
of the $S$-graph above is $$+5-5+ \deg f + \deg g =
\deg f + \deg g.$$

The intersection of an  $S$-graph with  $\mathbb R \times\{0\}$ (resp. with $\mathbb R \times \{1\}$) is a sequence of colored points called
\emph{bottom boundary} (resp. \emph{top boundary}). In our example, the
bottom (resp. top) boundary of the $S$-graph  is $(b,r,b,r,r,b,g,r)$
(resp. $(r,b,r,g,b,r,g,g)$).

\subsection{Relations in $\HC_{\mathrm{BS}}$} Let us define the Hecke category. 
In this section we will give a summary of the central result of 
\cite{EW}.  

 We define $\SD$ as the monoidal category with objects 
 sequences $\un{w}$ in $S$.
If $\un{x}$ and $\un{y}$ are two such sequences, we define
$\Hom_{\SD}(\un{x}, \un{y})$ as the free $R$-module
generated by isotopy classes of $S$-graphs with bottom boundary $\un{x}$ and top boundary
$\un{y}$, modulo the local relations below. Hom spaces are graded by the degree of the graphs (all the relations below are homogeneous). The structure of this
monoidal category is given by horizontal concatenation of diagrams for the tensor product of morphisms and vertical concatenation
of diagrams for the composition of morphisms. 

In what follows, the rank of a relation is the number of colors involved in the relation. 
We use the color red for ${\color{red}r}$ and
blue for {$\color{blue}b$}. 
\vspace{.3cm}

\subsubsection{Rank 1 relations.}
\emph{Frobenius unit:}
\begin{gather}
  \begin{array}{c}
    \tikz[scale=0.7]{\draw[dashed] (0,0) circle (1cm);
\draw[color=red] (-90:1cm) -- (0,0) -- (90:1cm);
\draw[color=red] (-0:0.5cm) -- (0,0);
\node[circle,fill,draw,inner sep=0mm,minimum size=1mm,color=red] at (-0:0.5cm) {};
}
  \end{array}
=
  \begin{array}{c}
    \tikz[scale=0.7]{\draw[dashed] (0,0) circle (1cm);
\draw[color=red] (-90:1cm) -- (0,0) -- (90:1cm);}
  \end{array}. \label{2.7.1}
\end{gather}
 
\emph{Frobenius associativity:}
\begin{gather}\label{2.7.2}
  \begin{array}{c}
    \tikz[scale=0.7]{\draw[dashed] (0,0) circle (1cm);
\draw[color=red] (-45:1cm) -- (0.3,0) -- (45:1cm);
\draw[color=red] (135:1cm) -- (-0.3,0) -- (-135:1cm);
\draw[color=red] (-0.3,0) -- (0.3,0);
}
  \end{array}
=
  \begin{array}{c}
    \tikz[scale=0.7,rotate=90]{\draw[dashed] (0,0) circle (1cm);
\draw[color=red] (-45:1cm) -- (0.3,0) -- (45:1cm);
\draw[color=red] (135:1cm) -- (-0.3,0) -- (-135:1cm);
\draw[color=red] (-0.3,0) -- (0.3,0);
}
  \end{array}.
\end{gather}

\emph{Needle relation:}
\begin{gather}
  \begin{array}{c}
    \begin{tikzpicture}[scale=0.7]
      \draw[dashed] (0,0) circle (1cm);
      \draw[red] (0,0) circle (0.6cm);
      \draw[red] (0,0.6) --(0,1);
\draw[red] (0,-0.6) --(0,-1);
    \end{tikzpicture}
  \end{array}= 0.
\end{gather}

\emph{Barbell relation:}
\begin{gather}
  \begin{array}{c}
    \tikz[scale=0.7]{\draw[dashed] (0,0) circle (1cm);
\draw[color=red] (0,-0.6) -- (0,0.6);
\node[circle,fill,draw,inner sep=0mm,minimum size=1mm,color=red] at (0,0.6) {};
\node[circle,fill,draw,inner sep=0mm,minimum size=1mm,color=red] at (0,-0.6) {};
}
  \end{array}
=
  \begin{array}{c}
    \tikz[scale=0.7,rotate=90]{\draw[dashed] (0,0) circle (1cm);
\draw (-0.5,-0.5) rectangle (0.5,0.5);
\node at (0,0) {$\alpha_{\color{red} r}$};
}
  \end{array}.
\end{gather}

\emph{Nil Hecke relation:} 
\begin{gather}\label{eq:nilHecke}
  \begin{array}{c}
    \begin{tikzpicture}[scale=0.7]
      \draw[dashed] (0,0) circle (1cm);
      \draw[red] (0,1) --(0,-1);
\draw (0.2,-0.4) rectangle (0.7,0.4);
\node at (0.4,0) {$f$};
    \end{tikzpicture}
  \end{array}= 
  \begin{array}{c}
    \begin{tikzpicture}[scale=0.7]
      \draw[dashed] (0,0) circle (1cm);
      \draw[red] (0,1) --(0,-1);
\node at (-0.4,0) {${\color{red} r}f$};
\draw (-0.1,-0.4) rectangle (-0.7,0.4);
    \end{tikzpicture}
  \end{array}
+ 
  \begin{array}{c}
    \begin{tikzpicture}[scale=0.7]
      \draw[dashed] (0,0) circle (1cm);
      \draw[red] (0,1) --(0,0.6);\node[circle,fill,draw,inner
      sep=0mm,minimum size=1mm,color=red] at (0,0.6) {}; 
      \draw[red] (0,-1) --(0,-0.6);\node[circle,fill,draw,inner
      sep=0mm,minimum size=1mm,color=red] at (0,-0.6) {}; 
\node at (0,0) {$\partial_{\color{red} r}f$};
\draw (-0.5,-0.4) rectangle (0.5,0.4);
    \end{tikzpicture}
  \end{array}.
\end{gather}
(See \S \ref{real} for the definition of $\partial_r$.)

\vspace{.3cm}

\subsubsection{Rank 2 relations.}

\emph{Two-color associativity:} We give the first three cases i.e.
$m_{{\color{red}r}{\color{blue}b}} =2, 3, 4$. It is not hard to  guess this relation for arbitrary  $m_{{\color{red}r}{\color{blue}b}}$ (see
\cite[6.12]{EDC}  for  details).

$m_{rb} = 2$ (type $A_1 \times A_1$):
\begin{gather*}
  \begin{array}{c}
    \begin{tikzpicture}
      \draw[dashed] (0,0) circle (1cm);
\draw[red] (-45:1) -- (135:1);
\draw[blue] (-135:1) -- (45:0.3);
\draw[blue] (45:0.3) -- (20:1);
\draw[blue] (45:0.3) -- (80:1); 
    \end{tikzpicture}
  \end{array}
=
  \begin{array}{c}
    \begin{tikzpicture}
      \draw[dashed] (0,0) circle (1cm);
\draw[red] (-45:1) -- (135:1);
\draw[blue] (-135:1) -- (-135:0.4);
\draw[blue] (-135:0.4) -- (20:1);
\draw[blue] (-135:0.4) -- (80:1); 
    \end{tikzpicture}
  \end{array}
\end{gather*}

$m_{rb} = 3$ (type $A_2$):
\[ \begin{array}{c}
  \begin{tikzpicture}
      \draw[dashed] (0,0) circle (1cm);
\coordinate (a1) at (30:1);\coordinate (a2) at (60:1);\coordinate (a3) at (90:1);
\coordinate (a4) at (135:1);\coordinate (a5) at (-135:1);\coordinate
(a6) at (-90:1); \coordinate (a7) at (-45:1);

\coordinate (l1) at (45:0.4); \coordinate (l0) at (0:0);
\draw[red] (a5) -- (l0) -- (a3);
\draw[red] (l0) -- (a7);

\draw[blue] (a1) -- (l1) -- (a2);
\draw[blue] (l1) -- (l0) -- (a4);
\draw[blue] (l0) -- (a6);
  \end{tikzpicture}
\end{array}
= 
\begin{array}{c}
  \begin{tikzpicture}
      \draw[dashed] (0,0) circle (1cm);
\coordinate (a1) at (30:1);\coordinate (a2) at (60:1);\coordinate (a3) at (90:1);
\coordinate (a4) at (135:1);\coordinate (a5) at (-135:1);\coordinate
(a6) at (-90:1); \coordinate (a7) at (-45:1);

\coordinate (r1) at (120:0.4); \coordinate (r2) at (-60:0.4); \coordinate (r3) at (-135:0.7);

\draw[blue] (a2) to (r1) to (a4);
\draw[blue] (r1) to[out=-90,in=150] (r2) to (a6);
\draw[blue] (r2) to (a1);

\draw[red] (a3) to (r1) to[out=-150,in=90] (r3) to (a5);
\draw[red] (r1) to[out=-30,in=90] (r2) to (a7);
\draw[red] (r2) to[out=-150,in=0] (r3);

  \end{tikzpicture}
\end{array}
\]

$m_{rb} = 4$ (type $B_2$):
\[ \begin{array}{c}
  \begin{tikzpicture}
      \draw[dashed] (0,0) circle (1cm);
\coordinate (a0) at (30:1); \coordinate (a1) at (55:1);
\coordinate (a2) at (75:1);\coordinate (a3) at (105:1); \coordinate (a4) at (135:1);
\coordinate (a5) at (-45:1);\coordinate (a6) at (-75:1);
\coordinate (a7) at (-105:1); \coordinate (a8) at (-135:1);

\coordinate (l1) at (40:0.4); \coordinate (l0) at (0:0);

\draw[blue] (a0) -- (l1) -- (a1);
\draw[blue] (l1) -- (l0) -- (a3);
\draw[blue] (a6) -- (l0) -- (a8);

\draw[red] (a5) -- (l0) -- (a7);
\draw[red] (a2) -- (l0) -- (a4);

  \end{tikzpicture}
\end{array}
= 
 \begin{array}{c}
  \begin{tikzpicture}
      \draw[dashed] (0,0) circle (1cm);
\coordinate (a0) at (30:1); \coordinate (a1) at (55:1);
\coordinate (a2) at (75:1);\coordinate (a3) at (105:1); \coordinate (a4) at (135:1);
\coordinate (a8) at (-45:1);\coordinate (a7) at (-75:1);
\coordinate (a6) at (-105:1); \coordinate (a5) at (-135:1);

\coordinate (r1) at (120:0.4); \coordinate (r2) at (-60:0.4); \coordinate (r3) at (-135:0.7);

\draw[blue] (a1) -- (r1) -- (a3);
\draw[blue] (r1) to[out=-135,in=90] (r3) to (a5);
\draw[blue] (r3) to[out=0,in=-135] (r2) to (a7);
\draw[blue] (r1) to[out=-75,in=105] (r2);
\draw[blue] (r2) to (a0);

\draw[red] (a2) -- (r1) -- (a4);
\draw[red] (r1) to[out=-105,in=135] (r2)  to[out=75,in=-45] (r1);
\draw[red] (a6) to (r2) to (a8);

  \end{tikzpicture}
\end{array}
\]

\emph{Elias' Jones--Wenzl relation:} This relation expresses a dotted $2m_{rb}$-vertex
\[
\begin{array}{c}
    \tikz[scale=0.7,rotate=-11]{\draw[dashed] (0,0) circle (1cm);
\draw[color=blue] (0,0) -- (22.5:1cm);
\draw[color=blue] (0,0) -- (67.5:1cm);
\draw[color=blue] (0,0) -- (112.5:1cm);
\draw[color=blue] (0,0) -- (157.5:1cm);
\draw[color=blue] (0,0) -- (-22.5:1cm);
\draw[color=blue] (0,0) -- (-67.5:1cm);
\draw[color=blue] (0,0) -- (-112.5:1cm);
\draw[color=red] (0,0) -- (0:1cm);
\draw[color=red] (0,0) -- (45:1cm);
\draw[color=red] (0,0) -- (90:1cm);
\draw[color=red] (0,0) -- (135:1cm);
\draw[color=red] (0,0) -- (180:1cm);
\draw[color=red] (0,0) -- (-45:1cm);
\draw[color=red] (0,0) -- (-90:1cm);
\draw[color=red] (0,0) -- (-135:1cm);
\draw[color=blue] (0,0) -- (-157.5:0.7);
 \draw[blue] (0,-1) --(0,-0.6);\node[circle,fill,draw,inner
      sep=0mm,minimum size=1mm,color=blue] at (-157.5:0.7) {}; 
}
\end{array}
\]
as a linear combination over $R$ of diagrams consisting only of 
trivalent vertices and dots (no $2m_{{\color{red}r}{\color{blue}b}}$-valent vertices). We present again  the first three cases i.e.
$m_{{\color{red}r}{\color{blue}b}} =2, 3, 4$ (this time it is not easy to guess the general form, see
\cite[6.13]{EDC}  for all the details).

$m_{rb} = 2$ (type $A_1 \times A_1$):
\begin{gather*}
  \begin{array}{c}
    \begin{tikzpicture}[scale=0.7]
      \draw[dashed] (0,0) circle (1cm);
\draw[red] (-45:1) -- (135:1);
\draw[blue] (45:1) -- (-135:0.6);
\node[circle,fill,draw,inner
      sep=0mm,minimum size=1mm,color=blue] at (-135:0.6) {}; 
    \end{tikzpicture}
  \end{array}
=
\begin{array}{c}
    \begin{tikzpicture}[scale=0.7]
      \draw[dashed] (0,0) circle (1cm);
\draw[red] (-45:1) -- (135:1);
\draw[blue] (45:1) -- (-135:-0.4);
\node[circle,fill,draw,inner
      sep=0mm,minimum size=1mm,color=blue] at (-135:-0.4) {}; 
    \end{tikzpicture}
  \end{array}
\end{gather*}

$m_{rb} = 3$ (type $A_2$):
\begin{gather*}
  \begin{array}{c}
    \begin{tikzpicture}[scale=0.7]
      \draw[dashed] (0,0) circle (1cm);
\foreach \r in {30,150,-90}
      \draw[red] (0,0) -- (\r:1cm);
\foreach \r in {90,-30}
      \draw[blue] (0,0) -- (\r:1cm);
\draw[blue] (0,0) -- (-150:0.7);
\node[circle,fill,draw,inner
      sep=0mm,minimum size=1mm,color=blue] at (-150:0.7) {}; 
    \end{tikzpicture}
  \end{array}
=
  \begin{array}{c}
    \begin{tikzpicture}[scale=0.7]
      \draw[dashed] (0,0) circle (1cm);
\foreach \r in {30,150,-90}
      \draw[red] (0,0) -- (\r:1cm);
\foreach \r in {90,-30}
{ \draw[blue] (\r:0.5) -- (\r:1cm);
\node[circle,fill,draw,inner sep=0mm,minimum size=1mm,color=blue] at (\r:0.5) {};}
    \end{tikzpicture}
  \end{array} + 
  \begin{array}{c}
    \begin{tikzpicture}[scale=0.7]
      \draw[dashed] (0,0) circle (1cm);
\draw[red] (-90:1) to[out=90,in=-30] (150:1);
\draw[blue] (90:1) to[out=-90,in=150] (-30:1);
\draw[red] (30:0.5) -- (30:1cm);
\node[circle,fill,draw,inner sep=0mm,minimum size=1mm,color=red] at (30:0.5) {};
    \end{tikzpicture}
  \end{array} 
\end{gather*}

$m_{rb} = 4$ (type $B_2$):
\begin{gather*}
  \begin{array}{c}
    \begin{tikzpicture}[scale=0.6,rotate=-22.5]
      \draw[dashed] (0,0) circle (1cm);
\foreach \r in {0,90,180,-90}
      \draw[red] (0,0) -- (\r:1cm);
\foreach \r in {45,135,-45}
      \draw[blue] (0,0) -- (\r:1cm);
\draw[blue] (0,0) -- (-135:0.6);
\node[circle,fill,draw,inner
      sep=0mm,minimum size=1mm,color=blue] at (-135:0.6) {}; 
    \end{tikzpicture}
  \end{array} = 
  \begin{array}{c}
    \begin{tikzpicture}[scale=0.6]
      \def\r{0.6}
      \draw[dashed] (0,0) circle (1cm);
 \foreach\a in {-90,45,135}
\draw[blue] (\a:1) to (\a:\r);
 \foreach\a in {90,-45}
\draw[red] (\a:1) to (\a:\r);
\draw[red] (-120:1) to[out=80,in=-45] (157.75:1);
\draw[blue] (-90:\r) to[out=90,in=-45] (135:\r);
\draw[red] (90:\r) to[out=-90,in=135] (-45:\r);
\node[circle,fill,draw,inner
      sep=0mm,minimum size=1mm,color=blue] at (45:\r) {}; 
    \end{tikzpicture}
  \end{array}
+ 
  \begin{array}{c}
    \begin{tikzpicture}[scale=0.6]
      \def\r{0.6}
      \draw[dashed] (0,0) circle (1cm);
 \foreach\a in {-90,45,135}
\draw[blue] (\a:1) to (\a:\r);
 \foreach\a in {90,-45}
\draw[red] (\a:1) to (\a:\r);
\draw[red] (-120:1) to (157.75:1);
\draw[red] (-135:\r) to (-135:0.85);
\draw[blue] (-90:\r) to[out=90,in=-135] (45:\r);
\draw[red] (-135:\r) to[out=45,in=-90] (90:\r);
\node[circle,fill,draw,inner
      sep=0mm,minimum size=1mm,color=blue] at (135:\r) {}; 
\node[circle,fill,draw,inner
      sep=0mm,minimum size=1mm,color=red] at (-45:\r) {}; 
    \end{tikzpicture}
  \end{array}
+  
\begin{array}{c}
    \begin{tikzpicture}[scale=0.6]
      \def\r{0.6}
      \draw[dashed] (0,0) circle (1cm);
 \foreach\a in {-90,45,135}
\draw[blue] (\a:1) to (\a:\r);
 \foreach\a in {90,-45}
\draw[red] (\a:1) to (\a:\r);
\draw[red] (-120:1) to (157.75:1);
\draw[red] (-135:\r) to (-135:0.85);
\draw[blue] (135:\r) to[out=-45,in=-135] (45:\r);
\draw[red] (-135:\r) to[out=45,in=135] (-45:\r);
\node[circle,fill,draw,inner
      sep=0mm,minimum size=1mm,color=red] at (90:\r) {}; 
\node[circle,fill,draw,inner
      sep=0mm,minimum size=1mm,color=blue] at (-90:\r) {}; 
    \end{tikzpicture}
  \end{array}
+  
\sqrt{2} \begin{array}{c}
    \begin{tikzpicture}[scale=0.6]
      \def\r{0.6}
      \draw[dashed] (0,0) circle (1cm);
 \foreach\a in {-90,45,135}
\draw[blue] (\a:1) to (\a:\r);
 \foreach\a in {90,-45}
\draw[red] (\a:1) to (\a:\r);
\draw[red] (-120:1) to (157.75:1);
\draw[red] (-135:\r) to (-135:0.85);
\foreach\a in {-135,-45,90}
\draw[red] (\a:\r) to (0,0);
\node[circle,fill,draw,inner
      sep=0mm,minimum size=1mm,color=blue] at (45:\r) {}; 
\node[circle,fill,draw,inner
      sep=0mm,minimum size=1mm,color=blue] at (-90:\r) {}; 
\node[circle,fill,draw,inner
      sep=0mm,minimum size=1mm,color=blue] at (135:\r) {}; 
    \end{tikzpicture}
  \end{array}
+  
\sqrt{2}\begin{array}{c}
    \begin{tikzpicture}[scale=0.6]
      \def\r{0.6}
      \draw[dashed] (0,0) circle (1cm);
 \foreach\a in {-90,45,135}
\draw[blue] (\a:1) to (\a:\r);
 \foreach\a in {90,-45}
\draw[red] (\a:1) to (\a:\r);
\draw[red] (-120:1) to[out=80,in=-45]  (157.75:1);
\foreach\a in {135,45,-90}
\draw[blue] (\a:\r) to (0,0);
\foreach\a in {-45,90}
\node[circle,fill,draw,inner
      sep=0mm,minimum size=1mm,color=red] at (\a:\r) {}; 
    \end{tikzpicture}
  \end{array}
\end{gather*}

\vspace{.3cm}

\subsubsection{Rank 3 relations.} We will not repeat the definition of the Zamolodchikov relations here, and instead refer the
reader to \cite[\S 1.4.3]{EW}.
This concludes the definition of $\HC_{\mathrm{BS}}$.

\subsection{The categories $\HC$ and $\HC_I$}
{

If $M=\bigoplus_iM^i$ is a $\mathbb{Z}$-graded object, we denote by $M(1)$
its grading shift, i.e. $M(1)^i=M^{i+1}.$  If $p=\sum_ja_jv^j\in  \ZM_{\geq 0}[v^{\pm 1}]$, we denote $$p\cdot M=\oplus_j M(j)^{\oplus a_j}.$$

Given an additive category
$\AC$ we denote by $[\AC]$ its split Grothendieck group.
If in addition $\AC$ has homomorphism spaces enriched in graded vector
spaces we denote by $\AC^\oplus$ its additive graded envelope. That
is, objects are formal finite direct sums $\bigoplus a_i(m_i)$ for certain objects
$a_i \in \AC$ and ``grading shifts'' $m_i \in \ZM$. Homomorphism
spaces in $\AC^\oplus$ are given by
\[
\Hom_{\AC^\oplus}( \bigoplus a_i(m_i), \bigoplus a_j'(m_j')) := \bigoplus
\Hom(a_i, a_j')(m_j' - m_i).
\]
We denote by $\AC^{\oplus, 0}$ the category with the same objects as
$\AC^{\oplus}$ but with homomorphism spaces given by the degree zero
morphisms in $\AC^{\oplus}$:
\[
\Hom_{\AC^{\oplus,0}}( b, b) :=\Hom_{\AC^\oplus}( b, b')^0.
\]
Both $\AC^\oplus$ and $\AC^{\oplus,0}$ are equipped with a grading shift
functor $b \mapsto b(1)$ given on objects by $\bigoplus a_i(m_i) \mapsto
\bigoplus a_i(m_i+1)$. Of course $\AC^{\oplus}$ is recoverable from
$\AC^{\oplus, 0}$ and the grading shift functor $(1)$. Finally we define
$\AC^e$ to be the Karoubian envelope of 
$\AC^{\oplus,0}$. In this setting, given objects $b, b' \in \AC^e$ we abbreviate:
\begin{gather*}
  \Hom(b,b') := \Hom_{\AC^e}(b,b'), \\
\gHom(b,b') := \bigoplus_{m \in \ZM} \Hom( b, b'(m)).
\end{gather*}

By definition the Hecke category $\HC$ is $\HC_{\mathrm{BS}}^e.$ 
If $I\subset S$, we define $\HC_I := \HC_{\mathrm{BS},I}^e$ where
$\HC_{\mathrm{BS},I}$ is the diagrammatic category
obtained by replacing $S$ by $I$ in the definition. (That is, all
diagrams in $\HC_{\mathrm{BS},I}$ are only allowed to be colored by
elements of $I$ and decorated by the symmetric algebra of $\oplus_{s\in I}\mathbb{R}\alpha_s$ over $\mathbb{R}$.) }



\subsection{Basic facts about $\HC$}\label{basic}

 Let us recall some terminology and notations from \cite{EW}. A
 \emph{subsequence} of an expression $\un{x} = s_1s_2\dots s_m$ is a sequence $\pi_1 \pi_2 \dots \pi_m$ such that $\pi_i \in \{ e, s_i \}$ for all $1 \le i \le m$. Instead of working with subsequences,
we work with the equivalent datum of a sequence $\eb = \eb_1 \eb_2 \dots \eb_m$ of 1's and 0's giving the indicator function of a subsequence, which we refer to as a \emph{01-sequence}. For an expression  $\un{x} = s_1s_2\dots s_m,$ we use the notation $\eb \subseteq \underline{x}$ if the $01$-sequence  $\eb$ has exactly $m$ terms. 

The \emph{Bruhat stroll} is the sequence $x_0=e, x_1,
\dots, x_m$ defined by \[ x_i := s_1^{\eb_1} s_2^{\eb_2} \dots s_i^{\eb_i} \] for $0 \le i \le m$. We call $x_i$ the $i^{\mathrm{th}}$-\emph{point} and $x_m$ the \emph{end-point} of the Bruhat stroll. We denote $x_m$ by $\un{x}^{\eb}$. Alternatively, we will say that a subsequence $\eb$ of $\un{x}$ \emph{expresses} the end-point $\un{x}^{\eb}$.

 Let $\eb$ and $\fb$ be two 01-sequences  of $\underline{x}=s_1s_2\cdots s_m$ and let their corresponding Bruhat strolls be $x_0, x_1, \dots, x_m$ and $y_0, y_1, \dots, y_m$. We say that $\eb\geq \fb $ in the \emph{path dominance order}  if $x_i \ge y_i$ for all $0 \le i \le m$. We define the \emph{double path dominance order} (a partial order) on pairs $(\eb,\fb)$, where $(\eb_1,\fb_1)\leq (\eb_2,\fb_2)$ if $\eb_1\leq \eb_2$ and $\fb_1\leq \fb_2$.


\emph{ Light leaves}  and \emph{Double leaves} for Soergel bimodules
were introduced  in \cite{LLL} and \cite{LLC}. They give bases, as $R$-modules of the Hom spaces between
Bott-Samelson bimodules. We recommend
reading the  paper \cite{LLC} in order to get used  to these combinatorial
objects and to read \S 6.1--6.3  of \cite{EW}, where these bases are explained diagrammatically.

In \cite[Definition 6.24]{EW} the authors define a character map $\ch:
[\HC] \to H$ and in  \cite[Corollary 6.27]{EW} they prove that it  is an isomorphism. This is the reason why we call  $\HC$ the
Hecke category.

 Following Soergel's
classification of indecomposable Soergel bimodules, in \cite[Theorem
6.26]{EW} the authors prove that the indecomposable objects in $\HC$
are indexed by $W$ modulo shift, and they call $B_w$ the
indecomposable object corresponding to $w\in W$.  It happens that the object $B_s$ is the sequence with one element $(s)\in \HC.$ Because of this, if $\underline{w}=(s,r,\cdots, t)$ we will sometimes denote by $B_{\underline{w}}:=B_sB_r\cdots B_t$  the element $\underline{w}\in \HC.$

 Let us suppose until the end of Section \ref{basic}  that $\hg$ is our favorite example,  the dual geometric representation over $\mathbb{R}$. Let us refer to the reflection faithful representation of $W$ over $\mathbb{R}$ that Soergel constructs \cite[\S 2]{Soe} as the \emph{Kac-Moody representation} $V_{\mathrm{KM}}.$ The  representation  $V_{\mathrm{KM}}$ is self-dual. By definition of $V_{\mathrm{KM}}$, we have $\hg^*\subset V_{\mathrm{KM}}.$ Using the mentioned self-duality, we obtain an injection of $W$-representations $i: \hg^*\hookrightarrow V_{\mathrm{KM}}^*.$ This extends to an injection of symmetric algebras $R =S(\hg^*)\hookrightarrow S(V_{\mathrm{KM}}^*).$ This means that one can see the diagrammatic Hecke category $\HC$ associated to the dual geometric representation as a  subcategory of the diagrammatic Hecke category associated with the Kac-Moody representation, that we denote $\HC(V_{\mathrm{KM}})$. The latter category is equivalent to the category of Soergel bimodules  $\BC(V_{\mathrm{KM}})$ as proved in \cite{EW}. This, and the main result of \cite{EW2} imply that $\ch([B_w]) = b_w$. 
 Thus the indecomposable objects in $\HC$ categorify the
 Kazhdan-Lusztig basis.

\subsection{The anti-spherical category $\NC$}\label{ASCN} Fix a subset $I \subset S$.
We define  the \textit {Bott-Samelson anti-spherical category}
$\NC_{\mathrm{BS}}$ to be the category $\HC_{\mathrm{BS}}$ quotiented
by the ideal of all objects indexed by $I$-sequences. The \textit
{anti-spherical category} $\NC$ is the graded additive Karoubian
completion of $\NC_{\mathrm{BS}}$, i.e.   $\NC := \NC_{\mathrm{BS}}^e.$
For $x\in ^I\hspace{-0.13cm}W$ we call  $D_x$ the image of $B_x$ in
the anti-spherical category $\NC$ and for $\underline{y}$ and
expression, we call $N_{\underline{y}}$ the image in $\NC$ of the
object $B_{\underline{y}}$, or in other words,
$N_{\underline{y}}=N_{\id}\cdot B_{\underline{y}} $, where $N_{\id}$
is the image of the empty sequence.

We define the category $\NC'$ to be the category $\HC$ quotiented by the ideal of all objects $B_x\in \HC$, with $x\notin ^I\hspace{-0.13cm}W.$

\begin{prop}\label{prop:N}
 There is an equivalence of categories $\NC\cong \NC'.$
\end{prop}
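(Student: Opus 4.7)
The plan is to construct a functor $F\co \NC \to \NC'$ induced by the composition $\HC_{\mathrm{BS}} \hookrightarrow \HC \twoheadrightarrow \NC'$ and show it is an equivalence. Essential surjectivity will be immediate, since every object of $\NC'$ comes from $\HC=\HC_{\mathrm{BS}}^e$ and is therefore a summand of a shifted direct sum of Bott--Samelson objects. The heart of the argument is full faithfulness, which I will reduce to matching two natural ideals inside $\Hom_{\HC}(\underline{x},\underline{y})$ for Bott--Samelson $\underline{x},\underline{y}$.

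\textbf{Well-definedness of $F$.} I would first check that any $I$-sequence is annihilated by the composition above. Let $\underline{w}=(s,s_2,\dots,s_m)$ with $s\in I$; in $\HC$ this equals $B_s B_{\underline{w}'}$, whose class under $\ch\co [\HC]\simto H$ is $b_s b_{\underline{w}'}$. Left multiplication by $b_s$ maps $H$ into $\bigoplus_{sz<z} \mathcal{L}\cdot b_z$: if $sy<y$ then $b_s b_y=(v+v^{-1})b_y$, and if $sy>y$ then $b_s b_y=b_{sy}+\sum \mu_s(z,y) b_z$ with every index satisfying $sz<z$. By (\ref{description1}), the condition $sz<z$ is equivalent to $z\notin{}^IW$. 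Since indecomposable objects in $\HC$ correspond bijectively to the Kazhdan--Lusztig basis (Soergel's theorem, recalled in \S~\ref{basic}), the indecomposable summands of $B_s B_{\underline{w}'}$ are all $B_z$'s with $z\notin{}^IW$; in particular, $\underline{w}$ lies in the ideal defining $\NC'$. The composition therefore factors through $\NC_{\mathrm{BS}}$ and extends, by the universal property of the Karoubian envelope, to $F\co \NC=\NC_{\mathrm{BS}}^e\to \NC'$ (noting that $\NC'$ is itself Karoubian, as a quotient of a Karoubian category by an ideal closed under summands).

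\textbf{Full faithfulness.} It suffices to verify full faithfulness on Bott--Samelson objects. For $\underline{x},\underline{y}\in \HC_{\mathrm{BS}}$, both $\Hom_{\NC}(\underline{x},\underline{y})$ and $\Hom_{\NC'}(\underline{x},\underline{y})$ are quotients of $\Hom_{\HC}(\underline{x},\underline{y})$, by respectively the ideal $\mathcal{I}_1$ of morphisms factoring through some $I$-sequence and the ideal $\mathcal{I}_2$ of morphisms factoring through a shifted direct sum of $B_z$'s with $z\notin{}^IW$. The inclusion $\mathcal{I}_1\subseteq\mathcal{I}_2$ is precisely the decomposition result above. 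Conversely, any morphism in $\mathcal{I}_2$ factors through some $B_z$ with $z\notin{}^IW$; by (\ref{description2}), some reduced expression $\underline{z}$ of $z$ is an $I$-sequence, and by Soergel's theorem $B_z$ is a direct summand of $B_{\underline{z}}$, so the morphism also factors through $\underline{z}$, proving $\mathcal{I}_2\subseteq\mathcal{I}_1$.

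\textbf{Main obstacle.} The central ingredient is the double invocation of Soergel's theorem $\ch([B_w])=b_w$: once to translate the Hecke-algebra identity $b_sH\subseteq\bigoplus_{sz<z}\mathcal{L}\cdot b_z$ into the desired decomposition of $B_s B_{\underline{w}'}$ in $\HC$, and once to recognize $B_z$ (for $z\notin{}^IW$) as a summand of any Bott--Samelson $B_{\underline{z}}$ built from a reduced expression that is an $I$-sequence. Without this categorification dictionary the two presentations of the anti-spherical category would be difficult to relate directly; with it, everything reduces to the elementary combinatorics of ${}^IW$ summarized in (\ref{description1}) and (\ref{description2}).
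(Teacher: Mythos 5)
Your proof is correct and rests on precisely the same two key facts as the paper's: that Bott--Samelson objects on $I$-sequences decompose into $B_z$'s with $z\notin{}^IW$, and that each such $B_z$ is a summand of some $I$-sequence via (\ref{description2}). The paper packages these as the well-definedness of a forward functor $\FC_3$ and an explicit inverse $\GC_3$, whereas you verify essential surjectivity and full faithfulness directly by matching the two ideals $\mathcal{I}_1 = \mathcal{I}_2$ inside $\Hom_{\HC}$ -- a minor reorganization of the same argument, if anything a bit more explicit about what full faithfulness actually requires.
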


\begin{proof}
Consider the  monoidal functor $\FC_1: \HC^{\oplus,0}_{\mathrm{BS}} \to \NC'$  defined as
the composition of the inclusion functor $\HC^{\oplus,0}_{\mathrm{BS}}
\hookrightarrow \HC$  with the canonical projection $\HC \to \NC'$.  

Let $s\in S$ and $x\in W$ be such that $sx>x$. If   $b_sb_x=b_{sx}+\sum_{y<sx} m_yb_y,$   we have that
$m_y\in \mathbb{Z}_{\geq 0}$ and that $m_y\neq 0\Rightarrow sy<y.$ Let $\underline{w}$ be an $I$-sequence, say $\underline{w}=(s,s_1\ldots, s_n)$ with $s\in I$. Consider the decomposition of the sequence $(s_1\ldots, s_n)$ into indecomposable summands $\oplus_z\, p_z\cdot B_z$, with $p_z\in \ZM_{\geq 0}[v^{\pm 1}]$. This gives a decomposition  $\underline{w}=\oplus_z\,   p_z\cdot B_sB_z$. This we can rewrite as $\underline{w}=\oplus_u\, p'_u\cdot B_u$, with $p'_u\in \ZM_{\geq 0}[v^{\pm 1}]$.
 Every $B_u$ appearing in a non-zero term of this sum is such that $su<u,$ thus $u\notin ^I\hspace{-0.13cm}W,$ and by definition they are zero in $\NC'$. So the functor   $\FC_1$ factors through the ideal generated by all $I$-sequences,  giving a functor $\FC_2: \NC^{\oplus,0}_{\mathrm{BS}} \to \NC'$. The category $\NC'$ is  idempotent complete, so the functor $\FC_2$ lifts to a functor between the corresponding Karoubian completions $\FC_3: \NC \to \NC'$.

We will now prove that $\FC_3$  is an equivalence of categories by
finding an inverse equivalence $\GC_3: \NC' \to \NC$ . Let $\GC_1:
\HC^{\oplus,0}_{\mathrm{BS}}  \to \NC^{\oplus,0}_{\mathrm{BS}} $ be the  lift to the graded envelope of the canonical
projection $ \HC_{\mathrm{BS}}  \to \NC_{\mathrm{BS}} $. The functor $\GC_1$ lifts to a functor between the corresponding
Karoubian completions $\GC_2: \HC  \to \NC. $ This functor is 
zero on any $B_x\in \HC$ such that $x\notin ^I\hspace{-0.13cm}W$ because
any such element is a summand of an $I$-sequence. This gives us a
functor $\GC_3: \NC'  \to \NC $ that is clearly an inverse equivalence
to $\FC_3.$ 
 \end{proof}

\subsection{$_Q\hspace{-0.00cm}\NC$: a localization of $\NC$.}  \label{sec:coinv}
We
will see in this section that a certain localized version of  $\NC$ is very simple. Thus the situation  for $\NC$ is similar (in terms of simplicity) to that of
$\HC$ (see \cite{EW}). This result was  unexpected (at least to the
authors).

For $I\subseteq S$, define the
 ring $R_I:= R/\langle \alpha _s \vert s\in I
\rangle$. It is the largest quotient on which the parabolic group
$W_I$ acts trivially.
If $A$ is either the ring $R$ or the ring $R_I,$ we use the notation $A(\frac{1}{\Phi_I^c})$ for the
localization of $A$ by all the roots $\alpha \in \Phi$ that are not in $\Phi_I$. In
formulas, $A(\frac{1}{\Phi_I^c})=A[\alpha^{-1} \vert\alpha \in\ \Phi\  \mathrm{and}\ \alpha
\notin\Phi_I]$. We define $Q_I:= R_{I}(\frac{1}{\Phi_I^c})$
(i.e. ``kill $I$ and invert the rest"). Define the category $${}_Q\NC_{\mathrm{BS}}:=Q_I\otimes_{R_I}\NC_{\mathrm{BS}}.$$
This tensor product notation means that the objects of
${}_Q\NC_{\mathrm{BS}}$ are the same as the objects of
$\NC_{\mathrm{BS}}$ and
$\mathrm{Hom}_{{}_Q\NC_{\mathrm{BS}}}(X,Y):=Q_I\otimes_{R_I}\mathrm{Hom}_{\NC_{\mathrm{BS}}}(X,Y)$. We
remark that if $s\in I$ then $\alpha_s$ is zero in $\NC_{\mathrm{BS}}$
(because of the Barbell relation). That is  why $R_I$ acts on the left of $\mathrm{Hom}_{\NC_{\mathrm{BS}}}(X,Y)$.  Another remark is that $Q_I$ is ungraded, and so is the category ${}_Q\NC_{\mathrm{BS}}$. 
Finally, we define the object of study of the following section
${}_Q\NC:=({}_Q\NC_{\mathrm{BS}})^e$.

 The right action of $\HC_{\mathrm{BS}}$ on $\NC_{\mathrm{BS}}$ extends in the obvious way to a right action of $\HC_{\mathrm{BS}}$ on $Q_I\otimes_{R_I}\NC_{\mathrm{BS}}$ (it is easy to check that this is indeed an action, i.e. to check the coherence conditions). Then, if a monoidal category acts on some category, its idempotent completion acts on the  idempotent completion of the category. Thus, the category ${}_Q\NC$ is  a right $\HC$-module.

\begin{notation}\label{cc}
 When the context is clear, we will denote  the identity morphism  $\mathrm{id}_M: M\to M$,  just by $M$. 
\end{notation}

 The following theorem will be proved in the next section.

\begin{thm}\label{Q}
 In ${}_Q\NC$ there is a set of objects $\{K_x\}_{x\in ^I\hspace{-0.03cm}W}$ satisfying the following properties.
\begin{enumerate}
\item\label{4a} $K_{\mathrm{id}}=N_{\mathrm{id}}$ (the image  in ${}_Q\NC$ of the empty sequence in $ \HC_{\mathrm{BS}}$).
\item\label{4b} $K_xf=x(f)K_x\ \mathrm{for}\  f\in R$.
\item\label{4c} For all $x\in ^I\hspace{-0.13cm}W$ we have $K_xB_s
  \cong 
\begin{cases}
K_x\oplus K_{xs}\ \mathrm{if}\ xs\in ^I\hspace{-0.13cm}W,\\
\hspace{0.6cm} 0 \hspace{0.7cm} \ \mathrm{if} \ xs\notin ^I\hspace{-0.13cm}W.
\end{cases}
$
\item\label{4d} For all $x, y \in ^I\hspace{-0.13cm}W$ we have $\mathrm{Hom}(K_x, K_y)=
\delta_{x,y} Q_I\cdot \mathrm{id}_{K_x} $ (where $\delta_{x,y}$ is the Kronecker  delta). 
\item\label{4e} Any object in ${}_Q\NC$ is isomorphic to a direct sum of $K_x$'s.
\end{enumerate}
\end{thm}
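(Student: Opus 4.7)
My plan is to proceed by induction on the length of $x \in {}^I W$, constructing the $Q_x$ one at a time while verifying the properties (\ref{4a})--(\ref{4d}) as I go, and deducing (\ref{4e}) at the end from Krull-Schmidt. The base step is immediate: set $Q_{\mathrm{id}} := N_{\mathrm{id}}$, which gives (\ref{4a}). For the inductive step, if $x \in {}^I W$ has positive length, choose $s \in S$ with $xs < x$ and $xs \in {}^I W$ (such $s$ exists by (\ref{description1})), set $y := xs$, and define $Q_x$ as the complementary summand of $Q_y$ inside $Q_y B_s$, once property (\ref{4c}) has been checked in the nonzero case.

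The main technical input is the analogous and already-known ``localization'' statement for the Hecke category itself: if one inverts all roots in $R$, then $\HC$ becomes equivalent to a twisted group algebra over the localized ring, with indecomposables $\widetilde Q_x$ ($x \in W$) satisfying $\widetilde Q_x f = x(f)\widetilde Q_x$, a decomposition $\widetilde Q_x B_s \cong \widetilde Q_x(1) \oplus \widetilde Q_{xs}(-1)$, and orthogonality $\mathrm{Hom}(\widetilde Q_x, \widetilde Q_y) = \delta_{x,y}$ times the localized ring. The idempotents implementing this decomposition are built from the ``broken line'' and ``dot'' diagrams together with division by $x(\alpha_s)$. This is the statement for $\HC$; what remains is to push it through the quotient $\HC \to \NC'$ of Proposition \ref{prop:N} and the extension of scalars $R \rightsquigarrow R_I \rightsquigarrow Q_I$.

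This is precisely where the parabolic property (Section \ref{pp}) does the key work. In the case $xs \notin {}^I W$, the property gives $x(\alpha_s) \in \Phi_I$, hence $x(\alpha_s) = 0$ in $R_I$; moreover $xs = rx$ for some $r \in I$, so that $B_x B_s$ is a direct summand of a product beginning with an $I$-sequence and therefore becomes zero in $\NC$. This gives the vanishing half of (\ref{4c}). In the complementary case $xs \in {}^I W$, the parabolic property ensures $x(\alpha_s) \notin \Phi_I$, hence $x(\alpha_s)$ is invertible in $Q_I$, so the idempotent decomposition from the Hecke category localization descends to ${}_Q\NC$ and yields $Q_y B_s \cong Q_y \oplus Q_x$ with appropriate shifts, producing a well-defined new summand $Q_x$. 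Property (\ref{4b}) is inherited from the analogous formula in the Hecke category and survives both the quotient and the scalar extension.

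Property (\ref{4d}) follows by an indirect argument: by (\ref{4c}) every $Q_x$ is a summand of a Bott-Samelson image in ${}_Q\NC$, so each $\mathrm{Hom}(Q_x, Q_y)$ is a subquotient of a Hom between Bott-Samelson objects, which in the Hecke localization becomes a free $Q_I$-module with basis indexed by double 01-sequences. After killing the $I$-sequence summands and specializing, only the pair of 01-sequences labeling the idempotents onto $Q_x$ and $Q_y$ survives non-trivially, and these produce nothing unless $x = y$. Finally, (\ref{4e}) follows since every object of $_Q\NC$ arises from a Bott-Samelson object and can be fully decomposed by iterated application of (\ref{4c}). The main obstacle I foresee is the careful bookkeeping required to show that the summand $Q_x$ produced at the inductive step does not depend on the chosen reduced expression — this will require the argument to be run inside the localized Hecke category first (where the answer is independent of choices thanks to the $\widetilde Q_x$ being canonical) and only then pushed down to ${}_Q\NC$.
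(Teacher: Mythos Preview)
Your inductive construction of the $Q_x$ and the verification of (\ref{4a}), (\ref{4b}), and the nonzero half of (\ref{4c}) is essentially the paper's approach (Lemma~\ref{act}, Lemma~\ref{iso}, Lemma~\ref{31}). The independence-of-rex issue you flag is real and is handled in the paper by a direct diagrammatic argument with the $2m$-valent vertex (Lemma~\ref{iso}); your proposed resolution via the fully localized Hecke category does not work, because there is no ring map from $R[\alpha^{-1}:\alpha\in\Phi]$ to $Q_I$ (in $Q_I$ the $I$-roots are \emph{zero}, not invertible), so you cannot simply ``push down''.

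There are two genuine gaps. First, the vanishing half of (\ref{4c}): your claim that ``$B_xB_s$ is a direct summand of a product beginning with an $I$-sequence'' is not correct. From $xs=rx$ with $r\in I$ you get that $B_{xs}$ is such a summand, but $B_xB_s$ (or $Q_xB_s$) has other summands not obviously killed by passing to $\NC$; and the observation $x(\alpha_s)=0$ in $R_I$ does not by itself force $Q_xB_s=0$. The paper's argument (Lemma~\ref{32}) is a separate induction on $\ell(x)$: one writes $\un{x}=\un{x}_{\min}(st\cdots)$ with $m_{st}-1$ letters, uses the longest element of $\langle s,t\rangle$ to deduce that $x_{\min}s\notin{}^IW$ or $x_{\min}t\notin{}^IW$, and then shows via the $2m_{st}$-valent vertex that $\id_{Q_xB_s}$ factors through something already known to vanish.

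Second, and more seriously, your argument for (\ref{4d}) gives only the easy inclusion $\Hom(Q_x,Q_y)\subseteq\delta_{x,y}Q_I$. The substantial content is that $Q_x\neq 0$, equivalently $\End(Q_x)\supseteq Q_I$ (Proposition~\ref{End}). This cannot be read off from the fully localized Hecke category for the ring-theoretic reason above, and it cannot be deduced from freeness of Hom spaces in ${}_Q\NC$ since that freeness (Theorem~\ref{DLT}) is proved \emph{using} Theorem~\ref{Q}. The paper produces an external witness: for finitary $I$ one builds the infinite twist $F_I^\infty\in K^-(\HC)$ (Section~\ref{sec:twist}) and a functor $\JC:{}_Q\NC\to K^-(\BC')$ sending $Q_x$ to $F_I^\infty\otimes Q'_x$, then checks directly that this image is nonzero. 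Without something playing this role your proof of (\ref{4d}), and hence also (\ref{4e}), does not close.
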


\begin{remark}\label{nonzero}
In particular, part $(4)$ of this Theorem  implies that  $0\neq K_x\in {}_Q\NC.$
\end{remark}

\begin{remark} \label{rem:locHaction}
For a subset $J \subset S$ one can consider the localisation
$R[J^{-1}] := R[\alpha^{-1}]_{\alpha \in \Phi_J}$ and the corresponding
category of localised diagrammatic Soergel bimodules $\HC[J^{-1}] :=
R[J^{-1}] \otimes_R \HC$. (The case when $R$ is the fraction field is
discussed in \cite[\S 1.6]{EW}. Partial localisations also
make sense.) A fundamental aspect of the current article is that
$\HC[J^{-1}]$ usually does \emph{not} act on ${}_Q\NC$. For example,
if $s \in I \cap J \ne \emptyset$ then $\alpha_s$ is an invertible morphism in
$\HC[J^{-1}]$ but acts as zero on $K_{\mathrm{id}} \in
{}_Q\NC$. (A more mundane way of seeing that $\HC[S^{-1}]$ cannot act on ${}_Q\NC$
follows by observing that some of the structure constants in the standard bases for the action
of the Coxeter group on the anti-spherical module are negative.)

However, there is one situation where part of the localised category
does act. Suppose that $x \in ^I\hspace{-0.13cm}W$ and $J \subset S$
satisfies $xW_J \subset ^I\hspace{-0.13cm}W$. One can deduce, using the Parabolic Property, that $x(\alpha) \notin
\Phi_I$ for all $\alpha \in \Phi_J$. Thus right action by
$\alpha$ is invertible on $K_x \in {}_Q\NC$ for all $\alpha \in
\Phi_J$.  By the universal property of localization\footnote{Recall that another definition of an action $\star: \mathcal{M}\times A\rightarrow A$ of a monoidal category $\mathcal{M}$ on a category $A$ is a strong monoidal functor $F:\mathcal{M} \rightarrow \mathrm{End}(A)$ into the monoidal category of endofunctors of $A$.},  $\HC[J^{-1}]$ acts on the full subcategory generated by
$K_x$. In this case, for any $u \in W_J$ one
has a canonical isomorphism $K_x \cdot Q_u = K_{xu}$, where $Q_u
\in \HC[J^{-1}]$ denotes the object considered in \cite[\S 5.4]{EW}. 
\end{remark}


\section{Proof of Theorem \ref{Q}} \label{sec:proofs}

\begin{prop}
  $\End_{\NC}(D_{\id}) = R_I$.
\end{prop}

\begin{proof}
  By  definition of the category $\NC$, we have
  \[ \End_{\NC}(D_{\id}) =\End_{\HC}(B_{\id})/J, \] where $J$ is the ideal of
$\End_{\HC}(B_{\id})$ generated as an $R$-module by maps that factor
through an $I$-sequence.  

By the double leaves theorem in $\HC$ we know that $\End_{\HC}(B_{\id})=R$ (the only double leaf in $\End_{\HC}(B_{\id})$ is the identity). So, if one defines the ideal $$\alpha_I:=\langle \alpha_s\, : \, s\in I\rangle \subset R,$$  to finish the proof we just need to prove that $J=\alpha_I.$

It is easy to see that $J\supset \alpha_I$, because if $s\in I$,  $\alpha_s\in \End_{\HC}(R)$ can be factored through $B_s$: 
\begin{equation*}
 R \stackrel{\tikz[scale=.3]{ \draw[red] (0,1) to (0,.4);
    \node[circle,fill,draw,inner sep=0mm,minimum size=1mm,color=red] at (0,.4) {};
}}{\longto}  B_s \stackrel{\tikz[scale=-.3]{ \draw[red] (0,1) to (0,.4);
    \node[circle,fill,draw,inner sep=0mm,minimum size=1mm,color=red] at (0,.4) {};
}}{\longto}  R  
\end{equation*}

 Let us prove that $J\subset \alpha_I$. Any map in $f\in J$ can be written as 
 $$ f=\sum_{\substack{\underline{x}\ \mathrm{is\ an} \\
I-\mathrm{sequence}}}p_{\underline{x}}\, (g_{\underline{x}}\circ h_{\underline{x}}),$$ 
where $p_{\underline{x}}$ is an element of $R$, $g_{\underline{x}}: \underline{x}\rightarrow B_{\id}$  and $h_{\underline{x}}: B_{\id}\rightarrow \underline{x}. $ By the double leaves theorem, each  $g_{\underline{x}}$   can be written as an $R$-linear combination of light leaves. We remark that we don't mean double leaves but honest light leaves, given that if the  codomain is $B_{\id}$,  double leaves are light leaves.  The same can be said of $h_{\underline{x}}$ (with upside-down light leaves). 

 Thus it is enough to prove that if $ f=g_{\underline{x}}\circ h_{\underline{x}},$ with $\underline{x}$ an $I$-sequence, $g_{\underline{x}}$ a light leaf and  $h_{\underline{x}}$ an upside-down light leaf, then $f\in \alpha_I.$ We prove this by induction on the length of $\underline{x}$. 

Suppose that $\underline{x}$ has $s$ in its left-most position. If
$g_{\underline{x}}$ and $h_{\underline{x}}$ have a $U0$  in the
left-most position then $f\in \langle \alpha_s \rangle$ and we are
done.

Suppose that either $g_{\underline{x}}$ or $h_{\underline{x}}$ have a $U0$ that is not in the left-most position. To fix ideas, say that it is the case for $g_{\underline{x}}$. Then one can write $g_{\underline{x}}$ as the composition of a dot and  $g_{\underline{y}}: \underline{y}\rightarrow B_{\id}$, with $\underline{y}$ an $I$-sequence and
$l(\underline{y})<l(\underline{x})$ as in the picture:
$$
\begin{tikzpicture}[x=0.5pt,y=0.5pt,yscale=-1,xscale=1]
\draw    (319.5,44) -- (267.5,111) ;
\draw    (319.5,44) -- (367.5,110) ;
\draw    (267.5,111) -- (367.5,110) ;
\draw    (368.2,139.8) -- (267,140.6) ;
\draw    (298.6,129) -- (298.4,140.2) ;
\draw   (296.32,126.87) .. controls (296.27,125.65) and (297.22,124.63) .. (298.44,124.59) .. controls (299.66,124.54) and (300.68,125.5) .. (300.73,126.72) .. controls (300.77,127.93) and (299.82,128.96) .. (298.6,129) .. controls (297.38,129.04) and (296.36,128.09) .. (296.32,126.87) -- cycle ;
\draw   (297.8,126.82) .. controls (297.78,126.42) and (298.1,126.08) .. (298.5,126.07) .. controls (298.9,126.06) and (299.23,126.37) .. (299.25,126.77) .. controls (299.26,127.17) and (298.95,127.5) .. (298.55,127.52) .. controls (298.15,127.53) and (297.81,127.22) .. (297.8,126.82) -- cycle ;
\draw   (297.27,126.84) .. controls (297.25,126.15) and (297.79,125.57) .. (298.48,125.55) .. controls (299.17,125.52) and (299.74,126.06) .. (299.77,126.75) .. controls (299.79,127.44) and (299.25,128.02) .. (298.57,128.04) .. controls (297.88,128.07) and (297.3,127.53) .. (297.27,126.84) -- cycle ;
\draw   (300.14,127.6) .. controls (300.12,127.12) and (299.72,126.75) .. (299.25,126.77) .. controls (298.77,126.79) and (298.4,127.18) .. (298.42,127.66) .. controls (298.43,128.14) and (298.83,128.51) .. (299.31,128.49) .. controls (299.78,128.47) and (300.15,128.07) .. (300.14,127.6) -- cycle ;
\draw    (267,140.6) -- (321,203) ;
\draw    (368.2,139.8) -- (321,203) ;
\draw   (385,138.2) .. controls (389.67,138.18) and (391.99,135.84) .. (391.97,131.17) -- (391.84,101.17) .. controls (391.81,94.5) and (394.13,91.16) .. (398.8,91.14) .. controls (394.13,91.16) and (391.79,87.84) .. (391.76,81.17)(391.77,84.17) -- (391.64,51.17) .. controls (391.62,46.5) and (389.28,44.18) .. (384.61,44.2) ;
\draw (309,75.73) node [anchor=north west][inner sep=0.75pt]    {$g_{\underline{y}}$};
\draw (310.33,150.4) node [anchor=north west][inner sep=0.75pt]    {$h_{\underline{x}}$};
\draw (185.33,116.4) node [anchor=north west][inner sep=0.75pt]    {$f\ =$};
\draw (405,78.93) node [anchor=north west][inner sep=0.75pt]    {$g_{\underline{x}}$};
\end{tikzpicture}
$$
By the double leaves theorem, the part of the diagram that is below $g_{\underline{y}}$ (i.e. the composition of $h_{\underline{x}}$ and the dot) can be written as an $R$-linear combination of leaves $\underline{y}\rightarrow B_{\id}$ flipped upside-down, so by the induction hypothesis we are done.  

So we are left with two cases:  the first case is that the light leaves of $f$ (i.e. $g_{\underline{x}}$ and $h_{\underline{x}}$) don't have $U0$'s. The second case is that one of them has one $U0$ in the left-most position and the other one has no $U0$'s.


The last step of any light leaf with codomain $B_{\id}$ can only be  $U0$ or  $D1$ ($D0$ and $U1$ don't produce the correct codomain). But in our cases the last step can not be a $U0$, so we conclude that in both cases, the last step of both light leaves of $f$
  are $D1$'s. Thus we have: 
$$
\begin{tikzpicture}[x=0.5pt,y=0.5pt,yscale=-1,xscale=1]
\draw    (179.7,70.6) -- (141.7,119.6) ;
\draw    (250.2,120.4) -- (141.7,119.6) ;
\draw    (209,70) -- (250.2,120.4) ;
\draw    (209,70) -- (179.7,70.6) ;
\draw    (180.2,169.6) -- (141.7,119.6) ;
\draw    (250.2,120.4) -- (210.2,170.4) ;
\draw    (210.2,170.4) -- (180.2,169.6) ;
\draw  [draw opacity=0] (184.66,70.84) .. controls (184.65,70.66) and (184.65,70.48) .. (184.65,70.3) .. controls (184.65,63.18) and (188.99,57.4) .. (194.35,57.4) .. controls (199.52,57.4) and (203.74,62.77) .. (204.03,69.54) -- (194.35,70.3) -- cycle ; \draw   (184.66,70.84) .. controls (184.65,70.66) and (184.65,70.48) .. (184.65,70.3) .. controls (184.65,63.18) and (188.99,57.4) .. (194.35,57.4) .. controls (199.52,57.4) and (203.74,62.77) .. (204.03,69.54) ;
\draw  [draw opacity=0] (205.2,170.3) .. controls (205.22,170.64) and (205.23,170.99) .. (205.23,171.33) .. controls (205.23,178.46) and (200.89,184.23) .. (195.53,184.23) .. controls (190.18,184.23) and (185.83,178.46) .. (185.83,171.33) .. controls (185.83,170.9) and (185.85,170.47) .. (185.88,170.05) -- (195.53,171.33) -- cycle ; \draw   (205.2,170.3) .. controls (205.22,170.64) and (205.23,170.99) .. (205.23,171.33) .. controls (205.23,178.46) and (200.89,184.23) .. (195.53,184.23) .. controls (190.18,184.23) and (185.83,178.46) .. (185.83,171.33) .. controls (185.83,170.9) and (185.85,170.47) .. (185.88,170.05) ;
\draw    (353.03,69.27) -- (315.03,118.27) ;
\draw    (423.53,119.07) -- (315.03,118.27) ;
\draw    (382.33,68.67) -- (423.53,119.07) ;
\draw    (382.33,68.67) -- (353.03,69.27) ;
\draw    (353.53,168.27) -- (315.03,118.27) ;
\draw    (423.53,119.07) -- (383.53,169.07) ;
\draw    (383.53,169.07) -- (353.53,168.27) ;
\draw  [draw opacity=0] (357.99,69.51) .. controls (357.99,69.33) and (357.98,69.15) .. (357.98,68.97) .. controls (357.98,61.84) and (362.33,56.07) .. (367.68,56.07) .. controls (372.85,56.07) and (377.07,61.44) .. (377.37,68.21) -- (367.68,68.97) -- cycle ; \draw   (357.99,69.51) .. controls (357.99,69.33) and (357.98,69.15) .. (357.98,68.97) .. controls (357.98,61.84) and (362.33,56.07) .. (367.68,56.07) .. controls (372.85,56.07) and (377.07,61.44) .. (377.37,68.21) ;
\draw  [draw opacity=0] (378.54,168.97) .. controls (378.56,169.31) and (378.57,169.65) .. (378.57,170) .. controls (378.57,177.12) and (374.22,182.9) .. (368.87,182.9) .. controls (363.51,182.9) and (359.17,177.12) .. (359.17,170) .. controls (359.17,169.57) and (359.18,169.14) .. (359.21,168.71) -- (368.87,170) -- cycle ; \draw   (378.54,168.97) .. controls (378.56,169.31) and (378.57,169.65) .. (378.57,170) .. controls (378.57,177.12) and (374.22,182.9) .. (368.87,182.9) .. controls (363.51,182.9) and (359.17,177.12) .. (359.17,170) .. controls (359.17,169.57) and (359.18,169.14) .. (359.21,168.71) ;
\draw    (367.67,56.17) -- (367.67,44.83) ;
\draw    (368.67,194.5) -- (368.67,183.17) ;
\draw   (365.83,42.92) .. controls (365.83,41.86) and (366.65,41) .. (367.67,41) .. controls (368.68,41) and (369.5,41.86) .. (369.5,42.92) .. controls (369.5,43.98) and (368.68,44.83) .. (367.67,44.83) .. controls (366.65,44.83) and (365.83,43.98) .. (365.83,42.92) -- cycle ;
\draw   (366.83,196.42) .. controls (366.83,195.36) and (367.65,194.5) .. (368.67,194.5) .. controls (369.68,194.5) and (370.5,195.36) .. (370.5,196.42) .. controls (370.5,197.48) and (369.68,198.33) .. (368.67,198.33) .. controls (367.65,198.33) and (366.83,197.48) .. (366.83,196.42) -- cycle ;
\draw  [color={rgb, 255:red, 0; green, 0; blue, 0 }  ][line width=3] [line join = round][line cap = round] (367.6,43.5) .. controls (367.67,43.3) and (367.73,43.1) .. (367.8,42.9) ;
\draw  [color={rgb, 255:red, 0; green, 0; blue, 0 }  ][line width=3] [line join = round][line cap = round] (368.4,196.7) .. controls (368.47,196.63) and (368.53,196.57) .. (368.6,196.5) ;
\draw  [dash pattern={on 4.5pt off 4.5pt}]  (298.71,50.29) -- (454.33,50.67) ;
\draw  [dash pattern={on 4.5pt off 4.5pt}]  (299,188) -- (450.33,189.33) ;
\draw (89.33,110.73) node [anchor=north west][inner sep=0.75pt]    {$f\ =$};
\draw (260.67,110.07) node [anchor=north west][inner sep=0.75pt]    {$\ \ =$};
\end{tikzpicture}
$$
The second equality is by the definition of the cup and cap. The map between the dotted lines is a negative degree map (degree $-2$) in $\mathrm{Hom}(B_s, B_t)$, where $s$ can be the same as $t$. This implies that $f=0$.

\end{proof}

\begin{cor}
$\End_{{}_Q\NC}(B_{\id}) = Q_I$.  
\end{cor}

We now turn to the proof of Theorem \ref{Q}. It relies on a modest amount of homological
algebra. Given an additive category, $\AC$ we denote by $K^b(\AC)$ the
homotopy category of bounded complexes in $\AC$. It is a triangulated
category. If $\AC$ is in
addition monoidal, then $K^b(\AC)$ is monoidal under tensor product of
complexes. If $\MC$ is a right $\AC$-module, then $K^b(\MC)$ is a
right $K^b(\AC)$-module.

In particular, $K^b(\HC)$ is a monoidal category, and it has right
modules $K^b(\NC)$ and $K^b({}_Q\NC)$. An important role will be
played by Rouquier complexes. For any $s \in S$ consider the complex
\[
F_s := 0 \to  B_s \stackrel{\tikz[scale=-.4]{ \draw[red] (0,1) to (0,.4);
    \node[circle,fill,draw,inner sep=0mm,minimum size=1mm,color=red] at (0,.4) {};
}}{\longto} R(1) \to 0
\]
where $B_s$ is in degree 0. It is known that $F_s$ is an invertible element of $K^b(\HC)$, with
inverse
\[
F_s^{-1} := 0 \to R(-1) \stackrel{\tikz[scale=.4]{ \draw[red] (0,1) to (0,.4);
    \node[circle,fill,draw,inner sep=0mm,minimum size=1mm,color=red] at (0,.4) {};
}
}{\longto} B_s \to 0 
\]
where $B_s$ is again in degree zero. Moreover, given any element $w
\in W$ we set
\[
F_w = F_{\un{w}} := F_sF_t \cdots F_u,
\]
where $\un{w} := st \cdots u$ is a reduced expression for $w$. This
complex does not depend on the reduced expression chosen. The
above results are due to Rouquier \cite{Ro}\footnote{In fact, \cite{Ro} shows
  that $F_w$ is defined up to canonical isomorphism. We won't need this
  stronger statement below.}. The reader may consult
\cite{AMRW2} for an in-depth discussion of Rouquier complexes in
the diagrammatic language.

The following beautiful little lemma is apparently well-known in the link
homology literature (see e.g. \cite{gorsky2017hilbert}):

\begin{lem} \label{lem:fxf}
  We have $F_x \cdot f = x(f) \cdot F_x$ as endomorphisms of $F_x \in K^b(\HC)$.
\end{lem}

\begin{proof}
  It is enough to check this for $x = s \in S$ a simple reflection and $f$ a homogeneous polynomial. 
   In  this case we need to check that $s(f) \cdot F_s- F_s \cdot f$ is
  null-homotopic. This map of complexes is
  \[ \begin{tikzpicture}[xscale=1.2,yscale=0.7]
\node (ul)  at (-1,1) {$B_s$};
\node (ur) at (1,1) {$R(1)$};
\node (ll) at (-1,-1) {$B_s(d)$};
\node (lr) at (1,-1) {$R(d+1)$};
\draw [->] (ul) to (ur);
\draw [->] (ll) to (lr);
\draw[->] (ul) to node[left] {$a_0$} (ll);
\draw[->] (ur) to node[right] {$a_1$} (lr);
\end{tikzpicture}
\]
where $d = \deg f$ and
\begin{gather*}
  a_0 = \begin{array}{c}
\tikz[scale=.7]{ \draw[dashed] (-1,1) rectangle (1,-1); \draw[red]
          (0,1) to (0,-1); \node at (-.5,0) {\small $s(f)$}; }
        \end{array}
        -
        \begin{array}{c}
\tikz[scale=.7]{ \draw[dashed] (-1,1) rectangle (1,-1); \draw[red]
          (0,1) to (0,-1); \node at (.5,0) {\small $f$}; }
        \end{array}
= \begin{array}{c}
\tikz[scale=.7]{ \draw[dashed] (-1,1) rectangle (1,-1); \draw[red]
    (0,1) to (0,.4);
    \node[circle,fill,draw,inner sep=0mm,minimum size=1mm,color=red] at (0,.4) {};
    \node[circle,fill,draw,inner sep=0mm,minimum size=1mm,color=red] at (0,-.4) {};
\draw[red] (0,-.4) to (0,-1); 
    \node at (0,0) {\small $-\partial_sf$}; }
  \end{array}
  \end{gather*}
  and
  \begin{gather*}
          a_1 = \begin{array}{c}
\tikz[scale=.7]{ \draw[dashed] (-1,1) rectangle (1,-1); \node at (0,0)
                  {\small $s(f) -f$ }; }
        \end{array}.
      \end{gather*}
      Now one checks directly that
      \[
h = \begin{array}{c}
  \tikz[scale=.7]{ \draw[dashed] (-1,1) rectangle (1,-1); \draw[red]
    (0,1) to (0,.4);
    \node[circle,fill,draw,inner sep=0mm,minimum size=1mm,color=red] at (0,.4) {};
    \node at (0,0) {\small $-\partial_sf$}; }
  \end{array} : R(1) \to B_s(d).
\]
provides the null-homotopy.
\end{proof}

We now turn to the proof in earnest.  Define $K_{\id}$ to be the image  in ${}_Q\NC$ of $D_{\id}$, also known as the empty sequence in $ \HC_{\mathrm{BS}}$.

\begin{prop} \label{prop:Kx}
  If $x \in {}^I W$ then $K_{\id} \cdot F_x$ is isomorphic to a
  complex concentrated in degree zero. Moreover, we have an isomorphism
\[
K_{\id} \cdot F_x \cong K_{\id} \cdot (F_{x^{-1}})^{-1}
\]
\end{prop}

In other words, once we have proved the proposition we know that there
exist objects
\[
  K_x \in {}_Q \NC \quad \text{for each $x \in {}^I W$}
\]
  such that
\[
K_x \cong K_{\id} \cdot F_x \in K^b({}_Q\NC).
\]
These objects will play a key role in the proof of Theorem
\ref{Q}.

\begin{proof}
  We will prove the proposition by induction on the length of $x$,
  with both statements in case $\ell(x) = 0$ being trivial. Write $x = ys$ with
  $\ell(x) = \ell(y) + 1$ and $y \in {}^IW$. By induction, there exists $K_{y} \in {}_Q \NC$
  such that
  \[
K_{y} \cong K_{\id} \cdot F_{y} \cong K_{\id} \cdot (F_{y^{-1}})^{-1} \in K^b({}_Q\NC).
\]
In particular, $K_{\id} \cdot F_{x}  =  K_{\id} \cdot F_{y}F_s$ is
isomorphic to the two-term complex
\begin{equation}
  \label{eq:2termcomplex}
\dots \to 0 \to K_{y}B_s \stackrel{K_y\tikz[scale=-.3]{ \draw[red] (0,1) to (0,.4);
    \node[circle,fill,draw,inner sep=0mm,minimum size=1mm,color=red] at (0,.4) {};
}}{\longto} K_{y}R \to \dots  
\end{equation}
whilst $K_{\id} \cdot (F_{x^{-1}})^{-1}  =  K_{\id} \cdot (F_{y^{-1}})^{-1}F_s^{-1}$ is
isomorphic to the two-term complex
\begin{equation}
  \label{eq:2termcomplex2}
\dots \to K_{y}R \stackrel{K_y\tikz[scale=.3]{ \draw[red] (0,1) to (0,.4);
    \node[circle,fill,draw,inner sep=0mm,minimum size=1mm,color=red] at (0,.4) {};
}}{\longto} K_{y}B_s \to 0 \to \dots  
\end{equation}
with $K_{y}B_s$ in degree zero in both complexes. 
 The composition
\begin{equation}
  \label{eq:comp}
K_{y} R \stackrel{K_y\tikz[scale=.3]{ \draw[red] (0,1) to (0,.4);
    \node[circle,fill,draw,inner sep=0mm,minimum size=1mm,color=red] at (0,.4) {};
}}{\longto} K_{y} B_s \stackrel{K_y\tikz[scale=-.3]{ \draw[red] (0,1) to (0,.4);
    \node[circle,fill,draw,inner sep=0mm,minimum size=1mm,color=red] at (0,.4) {};
}}{\longto} K_{y} R  
\end{equation}
is equal to $K_{y} \alpha_s$. By Lemma \ref{lem:fxf}, we have
\[
K_{y} \alpha_s = K_{\id} \cdot F_{y} \alpha_s = y(\alpha_s) \cdot F_{y}
\]
which is invertible by the parabolic property \ref{pp}. In particular we can find an isomorphism
\[
K_yB_s \cong K_y \oplus X
\]
such that the differentials in \eqref{eq:2termcomplex}
(resp. \eqref{eq:2termcomplex2}) are (up to a scalar) the projection (resp. inclusion)
of $K_y = K_yR$. Removing this contractible summand, we deduce
that
  \[
X \cong K_{\id} \cdot F_{x} \cong K_{\id} \cdot (F_{x^{-1}})^{-1} \in K^b({}_Q\NC).
\]
and the proposition follows.
\end{proof}

The following details the behaviour of $K_x$ under $F_s$ in general. 

\begin{prop} \label{prop:action}
  For any $x \in {}^I W$ we have
  \begin{equation*}
    K_x \cdot F_s \cong \begin{cases}
K_{xs} & \text{if $xs \in {}^I W$,} \\
K_{x}[-1] & \text{if $xs \notin {}^I W$.} \end{cases}
  \end{equation*}
\end{prop}

\begin{remark}
We leave it to the reader to formulate an analogous result for
$(F_s)^{-1}$. In particular, the $K_x$ are preserved under the action
of the braid group. (We will not need this fact below).
\end{remark}

\begin{proof}
  Let us first assume $xs \in {}^I W$. The only case not already directly covered by Proposition \ref{prop:Kx} is when $xs < x$. But then
\[
K_x \cdot F_s \cong K_{\id} \cdot (F_{x^{-1}})^{-1} F_s \cong  K_{\id} \cdot (F_{(xs)^{-1}})^{-1} \cong K_{xs}.
\]
by Proposition \ref{prop:Kx}.

We now examine the second case. If $x \in {}^IW$ and $xs \notin {}^IW$
then $xs = tx$ for some $t \in I$  (proof in Section \ref{pp}).  We
have 
\[
K_xF_s \cong K_{\id} F_{xs} \cong K_{\id} F_t F_x \stackrel{(*)}{\cong} K_{\id}[-1] F_x \cong K_x[-1]
\]
where for $(*)$ we use that
\[
D_{\id} ( B_t \to R) = (0 \to D_{\id}) = D_{\id}[-1]
\]
because $B_t = 0$ in $\NC$. (We ignore internal  shifts (i.e. $(1)$'s), as they
don't affect the outcome.)
\end{proof}

\begin{prop} \label{prop:homid}
  For $\id \ne x \in  {}^IW$ we have
  \[
\Hom_{{}_Q\NC}(K_x, K_{\id}) = 0.
\]
\end{prop}

\begin{proof} Let us choose a reduced expression $\un{x} = (s_1, \dots,
  x_m)$ for $x$. Consider the tensor product of complexes
  \[
F_{\un{x}} = F_{s_1}F_{s_2} \dots F_{s_m} = ( \dots \to 0 \to B_{\un{x}}
\stackrel{d_0}{\longto} \bigoplus_{i = 1}^m B_{\un{x}_{\hat{i}}} \to \dots)
\]
where:
\begin{enumerate}
\item $B_{\un{x}}$ is in degree 0;
 \item $B_{\un{x}_{\hat{i}}}$ indicates the tensor product of $B_{s_1}
   B_{s_2} \dots B_{s_m}$ with $B_{s_i}$ omitted; 
 \item the differential $d_0$ is a direct sum of dot maps tensored with copies of the identity map (up to sign).
 \end{enumerate}
 We first claim that
 \[
\Hom_{K^b(\HC)}(F_{\un{x}}, R) = 0.
\]
This is a special case of the main result in \cite{LiWi}.  But this particular case is easy enough that can be proved directly. Unpacking the definitions, the equality holds if and only if
\[
\Hom_{\HC}(\bigoplus_{i = 1}^m B_{\un{x}_{\hat{i}}}, R) \to \Hom_{\HC}(B_{\un{x}},R)
\]
is surjective. However, this is the case by the light leaves theorem, because any map in $ \Hom_{\HC}(B_{\un{x}},R)$ is an $R$-linear combination of light leaves, and each of these must have a $U0$ (i.e.  a dot) after a certain number of $U1'$s,  because $x\neq \mathrm{id}$.

Now consider the commutative diagram:
\[
  \begin{tikzpicture}[xscale=1.2,yscale=.7]
\node (ul) at (-2,2) {$\Hom_{\HC}(\bigoplus_{i = 1}^m
  B_{\un{x}_{\hat{i}}}, R)$};
\node (ur) at (2,2) {$\Hom_{\HC}(B_{\un{x}},R)$};
\node (ml) at (-2,0) {$\Hom_{\NC}(\bigoplus_{i = 1}^m
  B_{\un{x}_{\hat{i}}}, R)$};
\node (mr) at (2,0) {$\Hom_{\NC}(B_{\un{x}},R)$};
\node (ll) at (-2,-2) {$Q_I \otimes \Hom_{\NC}(\bigoplus_{i = 1}^m
  B_{\un{x}_{\hat{i}}}, R)$};
\node (lr) at (2,-2) {$Q_I \otimes \Hom_{\NC}(B_{\un{x}},R)$};
\draw[->] (ul) to node[above] {$(\HC)$} (ur);
\draw[->] (ml) to node[above] {$(\NC)$} (mr);
\draw[->] (ll) to node[above] {$({}_Q\NC)$} (lr);
\draw[->] (ul) to (ml);
\draw[->] (ml) to (ll);
\draw[->] (ur) to (mr);
\draw[->] (mr) to (lr); 
\end{tikzpicture}
  \]
We have just argued that the arrow labelled $(\HC)$ is surjective,
hence so is $(\NC)$ (the upper vertical maps are surjections by
definition of the morphisms in a quotient category), and hence so is
the arrow labelled $({}_Q\NC)$ (as $Q_I \otimes(-)$ preserves
surjections). This implies that
\[
  \Hom_{{}_Q\NC}(K_{x}, K_{\id}) = 
\Hom_{K^b({}_Q\NC)}(K_{x}, K_{\id}) = 
\Hom_{K^b({}_Q\NC)}(K_{\id} F_{\un{x}}, K_{\id}) = 0
\]
as claimed.
\end{proof}

\begin{remark}
  The objects $D_{\id} \cdot F_x$ and $D_{\id} \cdot
  F_{x^{-1}}^{-1}$ in $K^b(\NC)$ (for $x \in {}^IW$) should satisfy vanishing
  conditions generalizing the well-known vanishing

  \[\dim \Ext^i(\Delta_\lambda,
  \nabla_\mu) = \delta_{0,i}\delta_{\lambda,\mu}\]in highest weight
  categories. For the homotopy category of the Hecke category, this is
  proved in \cite{LiWi}. It is likely that the techniques of 
  \cite{MackyMoment,AR1,AR2} are most easily
  generalized to this setting. Such a vanishing result in $K^b(\NC)$
  would imply most results in this section.
\end{remark}

\begin{prop} \label{prop:homKxKy}
For $x, y \in {}^IW$ we have
  \[
    \Hom(K_x,K_y) =
     \begin{cases}
 Q_I & \text{if $x = y$,} \\
 0 & \text{otherwise.}
 \end{cases}
    \]
  \end{prop}

  \begin{proof} Action by the equivalence $F_x$ gives us identifications
    \[
Q_I = \End_{{}_Q\NC}(K_{\id}) = \End_{K^b({}_Q\NC)} (K_{\id} F_x) = \End_{{}_Q\NC}(K_x)
\]
from which the first statement follows.

We now proceed to the vanishing statement. By Proposition
\ref{prop:Kx} we have
\[
\Hom_{K^b({}_Q\NC)} (K_x, K_y) = \Hom_{K^b({}_Q\NC)}(K_x, K_{\id}
(F_{y^{-1}})^{-1}) =
\Hom_{K^b({}_Q\NC)}(K_xF_{y^{-1}}, K_{\id})
  \]
By Proposition \ref{prop:action} we know that $K_xF_{y^{-1}}$ is
isomorphic to $K_z[m]$ for some $z \in {}^I W$ and $m \le 0$, with $z
= {\id}$ and $m = 0$ if and only if $x = y$.  If $ m < 0$ we are done,
as there are no maps between complexes concentrated in different
degrees. If $m = 0$ then we are done by Proposition \ref{prop:homid}.
\end{proof}

Finally we establish:
\begin{prop} \label{prop:Bs}
  For $x \in {}^IW$ we have
\[ K_xB_s
  \cong 
\begin{cases}
K_x\oplus K_{xs}\ \mathrm{if}\ xs\in ^I\hspace{-0.13cm}W,\\
\hspace{0.6cm} 0 \hspace{0.7cm} \ \mathrm{if} \ xs\notin ^I\hspace{-0.13cm}W.
\end{cases}
\]
\end{prop}

\begin{proof}
  The fact that $K_xB_s \cong K_x \oplus K_{xs}$ when $xs > x$ and $xs
  \in {}^IW$ follows
  from the proof of Proposition \ref{prop:Kx}.  We now consider the
  case when $xs < x$ (and then necessarily $xs \in {}^IW$). The stupid
  filtration on $F_s^{-1}$  yield a distringuished triangle
  \[
B_s \to F_s^{-1} \to R(-1)[1] \triright
\]
This distringuished triangle can also be obtained as the mapping cone on $B_s\rightarrow F_s^{-1}$. If we turn it, we obtain
 \[
R(-1) \to B_s \to F_s^{-1} \triright.
\]
If we act on $K_x$ with this triangle we obtain a distinguished triangle
\[
K_x \to  K_xB_s \to K_{xs} \triright
\]
and hence $K_xB_s \cong K_x \oplus K_{xs}$ because $\Hom(K_{xs},
K_x[1]) = 0$.

We now consider the case when $xs \notin {}^IW$. Then necessarily $xs
= tx$ for some $t \in I$. We claim that in this case we have
isomorphisms
\begin{equation} \label{eq:xs=tx}
F_xB_s \cong B_tF_x \quad \text{in $K^b(\HC)$.}
\end{equation}
To establish \eqref{eq:xs=tx}, first note that $\Hom(R(1)[-1], F_s)$
(degree zero morphisms) is one-dimensional. Hence left and right
action by the equivalences $F_x$ allows us
to deduce the same statement for $\Hom(F_x(1)[-1], F_xF_s)$ and
$\Hom(F_x(1)[-1], F_tF_x)$. In particular, we have a commutative
diagram
\begin{equation}
  \label{eq:2}
  \begin{array}{c}
  \begin{tikzpicture}[yscale=.8]
\node (ul) at (-2,1) {$F_x(1)[-1]$};
\node (um) at (0,1) {$F_xF_s$};
\node (ur) at (2,1) {$F_xB_s$};
\node (ll) at (-2,-1) {$F_x(1)[-1]$};
\node (lm) at (0,-1) {$F_tF_x$};
\node (lr) at (2,-1) {$B_tF_x$};
\draw[->] (ul) to (um);
\draw[->] (um) to (ur);
\draw[->] (ll) to (lm);
\draw[->] (lm) to (lr);
\draw[->] (ul) to node[right] {$\sim$} (ll);
\draw[->] (um) to node[right] {$\sim$} (lm);
\draw[->] (ur) to node[above] {$[1]$} (3.5,1);
\draw[->] (lr) to node[above] {$[1]$} (3.5,-1);
\end{tikzpicture}
    \end{array}
  \end{equation}
 The triangles are obtained via action on the triangles
  \[
R(1)[-1] \to F_u \to B_u \triright
\]
by $F_x$ on the left (with $u = s$) and right (with $u = t$). Now
   \eqref{eq:xs=tx} follows by the existence of a (non-canonical)
   isomorphism of cones.

   We are done:
\[
K_xB_s = K_{\id} \cdot F_xB_s \stackrel{\eqref{eq:xs=tx}}{\cong} K_{\id} \cdot B_tF_x = 0
\]
because $B_t = 0$ in $\NC$.
\end{proof}

We now turn to the proof of the main theorem:

\begin{proof}[Proof of Theorem \ref{Q}] 
  (1) is immediate from the definitions, (2) follows from Lemma
  \ref{lem:fxf}, (3) follows from Proposition \ref{prop:Bs}, (4) follows from Proposition \ref{prop:homKxKy}, and
  (5) is immediate from (3).
\end{proof}

\section{$I$-antispherical double leaves are a basis}

In this section we will follow the notation of \cite[Construction
6.1]{EW}, where light leaves and double leaves are explained in
diagrammatic terms. However we make a slight modification of the construction therein. In the definition of $\phi_k$  start by doing the following. If $w_{k-1}s\notin ^I\hspace{-0.13cm}W$ and $\mathrm{e}_k$ is either $U0$ or $U1$ then apply some loop in the rex graph starting (and ending) in $\underline{w}_{k-1}s$ and passing through an $I$-sequenceAn.  If not, do nothing. This slight modification in the construction changes nothing in the proof that these morphisms give bases of the corresponding Hom spaces. 

Recall that if $\LL_{{\un{x}},\eb} \co B_{\un{x}} \to B_{\un{w}}$ is a light leaves map where ${\un{w}}$ is a rex for $w$, by flipping this diagram upside-down, we get a map $\oLL_{{\un{x}},\eb} \co
B_{\un{w}} \to B_{\un{x}}$. 

Let ${\un{x}}$ and ${\un{y}}$ be arbitrary sequences with subsequences $\eb$ and $\fb$ respectively, such that $({\un{x}},\eb)$ and $({\un{y}},\fb)$ both express $w$. Choose a rex ${\un{w}}$ for $w$, and
construct maps $\LL_{{\un{x}},\eb} \co B_{\un{x}} \to B_{\un{w}}$ and $\oLL_{{\un{y}},\fb} \co B_{\un{w}} \to B_{\un{y}}$. The corresponding \emph{double leaves map} is the composition 
\[
\LLL_{w,\fb,\eb} \, \define \, \,  \oLL_{{\un{y}},\fb} \circ \LL_{{\un{x}},\eb}.
\] 
Finally, $\mathbb{LL}_{\underline{x},\underline{y}}$ is the set consisting of all double leaves of the form $\LLL_{w,\fb,\eb}$ (for all $w\in W$ and all subexpressions $\eb$ and $\fb$ such that $({\un{x}},\eb)$ and $({\un{y}},\fb)$ both express $w$.)

\begin{defi} 
A subexpression $\eb$ of $\underline{x}=s_{1}s_{2}\cdots s_{m}$ ($\underline{x}$ is not necessarily reduced) is \emph{I-antispherical} if for all $0\leq k< m$ we have $$x_ks_{{k+1}}\in ^I\hspace{-0.13cm}W,
$$ where $x_i$ is the $i^{\mathrm{th}}$-\emph{point} of the Bruhat stroll. An element of the set $LL_{\underline{x}, \eb}$ is called an \emph{I-antispherical light leaf} if $\eb$ is an $I$-antispherical subexpression of  $\underline{x}$.  An \emph{I-antispherical double leaf} is a double leaf  which is a composition of two  $I$-antispherical light leaves. Let $\underline{y}$ be another expression. Then we define the set $\mathbb{LL}^{I\mathrm{as}}_{\underline{x},\underline{y}}$ as the subset of $\mathbb{LL}_{\underline{x},\underline{y}}$ consisting of $I$-antispherical double leaves. We will also denote by $\mathbb{LL}^{I\mathrm{as}}_{\underline{x},\underline{y}}$ this set in $\mathcal{N}.$

\end{defi}

\begin{remark}\label{bre}
Let $J\subseteq I\subseteq S$. As ${}^IW\subseteq {}^JW$, we have that if a light leaf is $I$-antispherical, then it is also $J$-antispherical.
\end{remark}

\begin{thm}\label{DLT}
Let  $\underline{x}$ and $\underline{y}$ be (not necessarily reduced) expressions.
 The set
$\mathbb{LL}^{I\mathrm{as}}_{\underline{x},\underline{y}}$  forms a
free $R_I$-basis for $\mathrm{Hom}^{\bullet}_{\mathcal{N}}(N_{\underline{x}},
N_{\underline{y}})$ as a left module. 
\end{thm}

\begin{remark}
In several cases (for instance, if $\underline{x}$ or $\underline{y}$ are $I$-sequences) this theorem just says that an empty set is a basis of the zero module. 
\end{remark}

\begin{proof}

In $\mathcal{H}$ the set $\mathbb{LL}_{\underline{x},\underline{y}}$ generates over $R$ (moreover is an $R$-basis for) the space $\mathrm{Hom}^{\bullet}_{\mathcal{H}}(B_{\underline{x}}, B_{\underline{y}})$. By definition of $\mathcal{N}$ we deduce that the set $\mathbb{LL}_{\underline{x},\underline{y}}$ generates over  $R$ the space $\mathrm{Hom}^{\bullet}_{\mathcal{N}}(N_{\underline{x}}, N_{\underline{y}})$. As $\alpha_s=0\in \mathcal{N}$ if $s\in I,$ we deduce that  the set $\mathbb{LL}_{\underline{x},\underline{y}}$ generates over  $R_I$ the space $\mathrm{Hom}^{\bullet}_{\mathcal{N}}(N_{\underline{x}}, N_{\underline{y}})$. 

But it is easy to see that with the slightly modified construction of
light leaves (discussed above)  a light leaf that is not $I$-antispherical    is zero in $\mathcal{N}$. Indeed, suppose there is some $0\leq k< m$ such that  $x_{k}s_{k+1}\notin {}^IW.$
Consider the least $k$ with that property. If $k=0$ then  $s_1\in I$ and the light leaf is zero. If $k>1$ then $x_{k-1}s_{k}\in {}^IW,$
thus in any case $x_{k}\in {}^IW.$
By Fact (c) in Section  \S \ref{ra}, we have the inequality
  $x_{k}<x_ks_{k+1}.$ So we have that $\mathrm{e}_{k+1}$ is either
  $U0$ or $U1$, so by the slight modification of the construction, the
  light leaf  factors through some  $I$-sequence, thus is zero. So we
  have proved that the $I$-antispherical double leaves generate the
  space  $\mathrm{Hom}^{\bullet}_{\mathcal{N}}(N_{\underline{x}},
  N_{\underline{y}})$ over $R_I$.

  The proof of the linear independence of light leaves and
  double leaves is very similar to the proof in the Hecke category, so
  we will be brief. First note that repeated application of the
  canonical decomposition (for $x \in {}^IW$)
  \[
K_x \cdot B_s = \begin{cases} K_x \oplus K_{xs} & \text{if $xs \in
    {}^IW$}, \\
  0 & \text{if $xs \notin {}^IW$} \end{cases}
    \]
    of Theorem \ref{Q}(\ref{4c}) gives a canonical decomposition
    \begin{equation}
      \label{eq:CanSplit}
      N_{\underline{x}}=\bigoplus_{\substack{\eb \subseteq \underline{x}\\
I\mhyphen\mathrm{antispherical}}} K_{\eb},
    \end{equation}
where $K_{\eb}:=K_{\underline{x}^{\eb}}$.
Now consider an anti-spherical light leaf
    \[
LL_{\underline{x}, \eb} : N_{\un{x}} \to N_{\un{w}}
\]
where $\un{w}$ is a reduced expression for $w \in {}^I W$. After
localizing and projecting to the canonical summand $K_w \subset
N_{\un{w}}$ we get maps
\[
p_{\fb}^{\eb} : K_{\fb} \to K_w
  \]
for each $I$-antispherical subexpression $\fb \subseteq \un{x}$ for
$w$. Because $Q_I = \Hom(K_{\fb}, K_{w})$
(see Theorem \ref{Q}(\ref{4d})) we may regard $p_{\fb}^{\eb}$ as
  an element of $Q_I$.

  \begin{prop} \label{prop:LLupper}
    We have that $p_{\fb}^{\eb}=0$ unless $\fb \le \eb$ in path
    dominance order. Moreover, $p_{\eb}^{\eb}$ is a non-zero
    product of the images of roots in $Q_I$, which is independent of the
    choice of light leaves.
  \end{prop}

  \begin{proof}
    The proof is very similar to the proof of \cite[Proposition
    6.6]{EW}. (Note that the $\alpha_k$ which appears in the proof of \cite[Proposition
    6.6]{EW} has non-zero image in $Q_I$ by the parabolic property.)
  \end{proof}

  In ${}_Q\NC$, the morphism $\LLL_{w,\fb,\eb} $ gives a coefficient
  $p^{\fb,\eb}_{\fb',\eb'}\in Q_I$ given by the inclusion of each
  standard summand $K_{\eb'}$ of $N_{\un{x}}$ and projection to each
  standard summand $K_{\fb'}$ of $N_{\un{y}}$, in the decomposition \eqref{eq:CanSplit}.
The following facts about these coefficients are easy consequences of
Proposition \ref{prop:LLupper} (see also the discussion in \cite[\S 6.3]{EW}):
\begin{itemize} \item $p^{\fb,\eb}_{\fb',\eb'}=0$ unless $({\un{x}},\eb')$ and $({\un{y}},\fb')$ express the same element $v$. This is a direct consequence of Theorem \ref{Q}(\ref{4d}).
\item $p^{\fb,\eb}_{\fb',\eb'}=0$ unless both $\eb'
\le \eb$ and $\fb' \le \fb$. This is a direct consequence of the construction of light leaves, the fact that the composition of the projection and the dot $$ B_{\id} \stackrel{\tikz[scale=-.3]{ \draw[red] (0,1) to (0,.4);
    \node[circle,fill,draw,inner sep=0mm,minimum size=1mm,color=red] at (0,.4) {};
}}{\longto}  B_s{\longto} K_s  $$ and the composition of the dot and the inclusion
$$K_s\ {\longto} B_s \stackrel{\tikz[scale=.3]{ \draw[red] (0,1) to (0,.4);
    \node[circle,fill,draw,inner sep=0mm,minimum size=1mm,color=red] at (0,.4) {};
}}{\longto}  B_{\id}  $$ are both zero, and again Theorem \ref{Q}(\ref{4d}).
\item The element $ p^{\fb,\eb}_{\fb,\eb}$ is invertible in $ Q_I$. Moreover, it is a product of roots, obeying a
simple formula independent of the choice of $\LL$ maps.
\end{itemize}

Consider the double path dominance order introduced in \S \ref{basic}, restricted to pairs of 01-sequences with the same fixed end-point. As we have seen, $\LLL$ maps satisfy upper-triangularity with respect to this partial order, with an invertible diagonal, thus giving linear independence of $\mathbb{LL}^{I\mathrm{as}}_{\underline{x},\underline{y}}$ over $R_I$. \end{proof}

\section{Categorification theorem}

Recall that in Section \ref{ASCN} we defined $D_x$ as the image of the indecomposable object
$B_x$ in $\mathcal{N}$. By the definition of $\mathcal{N}$ as a
quotient of additive categories, it is clear that the set
\[
\{ D_x \; |
\; x \in ^I\hspace{-0.13cm}W \}
\]
is a set of representatives for the
isomorphism classes of indecomposable objects of $\mathcal{N}$, up to
shift. Its image in $_Q\hspace{-0.03cm}\NC$ is of the form $$K_x\oplus
\bigoplus_{y<x}m_y\cdot K_y$$ with $m_y\in \mathbb{N}$ (note that the
$Q_x$ are  non-zero by Remark \ref{nonzero}). For any $x \in ^I\hspace{-0.13cm}W$ consider the full additive
subcategory
\[
\mathcal{N}_{\not \ge x} := \langle D_y(m) \; | \; y \not \ge x \text{
  and } m \in \ZM \rangle_{\oplus} \subseteq \mathcal{N},
\]
and the quotient (of additive categories)
\[
\mathcal{N}^{\ge x} := \mathcal{N} / \mathcal{N}_{\not \ge x}.
\]

\begin{lem} \label{lem:homI}
For any expression $\un{y}$,
$\mathrm{Hom}^{\bullet}_{\mathcal{N}^{\ge x}}( N_{\un{y}}, D_x)$ is a free graded
$R_I$-module, with basis  the (images of) the
$I$-antispherical light leaves corresponding to $I$-antispherical
subexpressions of $\un{y}$ expressing $x$. 
\end{lem}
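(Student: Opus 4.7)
My strategy is to fix a reduced expression $\un{x}$ of $x$, apply Theorem \ref{DLT} to describe the Hom space into $N_{\un{x}}$, identify which double leaves survive after passing to $\mathcal{N}^{\ge x}$, and then localise to ${}_Q\mathcal{N}$ to deduce linear independence.

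First I would fix a reduced expression $\un{x}$ of $x$. Combining the decomposition $B_{\un{x}} \cong B_x \oplus \bigoplus_{v<x}(\text{shifts of }B_v)$ in $\HC$ with the definition of $\mathcal{N}$ yields
\[
N_{\un{x}} \cong D_x \oplus \bigoplus_{v<x,\, v\in {}^IW}(\text{shifts of }D_v)
\]
in $\mathcal{N}$. Every $D_v$ with $v<x$ lies in $\mathcal{N}_{\not\ge x}$, so $N_{\un{x}} \cong D_x$ in $\mathcal{N}^{\ge x}$; consequently
\[
\gHom_{\mathcal{N}^{\ge x}}(N_{\un{y}},D_x) = \gHom_\mathcal{N}(N_{\un{y}},N_{\un{x}})/K,
\]
where $K$ is the $R_I$-submodule of morphisms factoring through $\mathcal{N}_{\not\ge x}$.

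Next I would apply Theorem \ref{DLT}: $\gHom_\mathcal{N}(N_{\un{y}},N_{\un{x}})$ is $R_I$-free on the $I$-antispherical double leaves $\LLL_{w,\fb,\eb} = \oLL_{\un{x},\fb}\circ\LL_{\un{y},\eb}$. Since $\un{x}$ is reduced, every subexpression expresses an element $w\le x$. For $w<x$ the double leaf factors through $N_{\un{w}}$ (for $\un{w}$ a rex of $w$), whose indecomposable summands are $D_v$ with $v\le w<x$, so $\LLL_{w,\fb,\eb}\in K$. For $w=x$, the only subexpression of the reduced expression $\un{x}$ expressing $x$ is the all-ones subexpression $\mathbf{1}$ (which is $I$-antispherical because the prefixes of any rex of an element of ${}^IW$ themselves lie in ${}^IW$, by description \eqref{description2}); moreover $\oLL_{\un{x},\mathbf{1}} = \id_{N_{\un{x}}}$, so $\LLL_{x,\mathbf{1},\eb} = \LL_{\un{y},\eb}$. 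Hence the images of the light leaves $\LL_{\un{y},\eb}$, with $\eb$ ranging over $I$-antispherical subexpressions of $\un{y}$ expressing $x$, span the quotient over $R_I$.

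The main obstacle is showing these images are $R_I$-linearly independent modulo $K$. The plan is to localise. A Krull--Schmidt argument in ${}_Q\mathcal{N}$ (using Lemma \ref{descomposition} and induction on the length of $v$) identifies $D_v \cong Q_v$ in ${}_Q\mathcal{N}$ for every $v\in {}^IW$; then part (\ref{4d}) of Theorem \ref{Q} gives $\Hom_{{}_Q\mathcal{N}}(Q_v,Q_x)=0$ for $v\ne x$. So $K$ maps to zero under
\[
\gHom_\mathcal{N}(N_{\un{y}},N_{\un{x}}) \longrightarrow \Hom_{{}_Q\mathcal{N}}(N_{\un{y}},N_{\un{x}}) \longrightarrow \Hom_{{}_Q\mathcal{N}}(N_{\un{y}},Q_x) = \bigoplus_{\eb\,:\,\un{y}^\eb=x} Q_I,
\]
where the second arrow projects onto the $Q_x = Q_{\un{x}^{\mathbf{1}}}$ summand of $N_{\un{x}}$ from Lemma \ref{descomposition}. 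The upper-triangularity of the $\LLL$-maps with respect to the double path dominance order, together with the invertibility in $Q_I$ of the diagonal coefficients $p^{(\mathbf{1},\eb)}_{(\mathbf{1},\eb)}$ (both recalled in the proof of Theorem \ref{DLT}), shows that the images of the $\LL_{\un{y},\eb}$ form an upper-triangular matrix over $Q_I$ with invertible diagonal, hence are $Q_I$-linearly independent. Since $R_I$ embeds in $Q_I$, the desired $R_I$-linear independence in the quotient follows.
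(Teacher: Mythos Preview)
Your overall strategy coincides with the paper's: pass to $N_{\un{x}}$ for a reduced expression $\un{x}$, invoke Theorem~\ref{DLT}, observe that double leaves with a non-trivial upper half factor through $\mathcal{N}_{\not\ge x}$ (giving spanning), and then localise to ${}_Q\NC$ for linear independence. However, there is a genuine error in the localisation step.

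The assertion $D_v \cong Q_v$ in ${}_Q\NC$ is false. Already for $I=\emptyset$ (so $\NC=\HC$) and $v$ the longest element of $S_3$, the object $D_v=B_v$ decomposes in the localised category as a direct sum of six standard objects $Q_y$, one for each $y\le v$, not just $Q_v$. Your proposed Krull--Schmidt induction cannot close: matching multiplicities in $N_{\un{v}}$ would require knowing that the multiplicity of $D_w$ in $N_{\un{v}}$ equals the number of $I$-antispherical subexpressions of $\un{v}$ expressing $w$, and that is essentially the categorification theorem you are heading towards.

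What you actually need is the much weaker (and immediate) statement that for $v\not\ge x$ the image of $D_v$ in ${}_Q\NC$ is a direct sum of $Q_z$'s with $z\le v$: indeed $D_v$ is a summand of $N_{\un{v}}$, which by Lemma~\ref{descomposition} is a direct sum of such $Q_z$, and orthogonality (part~(\ref{4d}) of Theorem~\ref{Q}) forces any summand to be of the same form. Since then every $z\ne x$, you get $\Hom_{{}_Q\NC}(D_v,Q_x)=0$ and hence $K$ dies, as desired. The paper achieves the same effect by passing to the further quotient ${}_Q\NC/\langle Q_y : y\not\ge x\rangle_\oplus$, where $N_{\un v}$ (hence its summand $D_v$) visibly vanishes for $v\not\ge x$; this yields a functor out of $\NC^{\ge x}$ on which one can test linear independence directly via Theorem~\ref{DLT}. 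With this correction in place, the remainder of your argument goes through.
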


\begin{proof} Let $\un{x}$ be a reduced expression for $x$. 
By Theorem \ref{DLT}, $\mathrm{Hom}^{\bullet}_{\mathcal{N}}( N_{\un{y}},
N_{\un{x}})$ is free over $R_I$ with basis given by $I$-antispherical
double leaves. However, when we pass to the quotient $\mathcal{N}^{\ge
  x},$ all double leaves with non-trivial upper light leaf factor
through an object in $\mathcal{N}_{\not \ge x}$ and are therefore
zero. We conclude that the claimed elements span
  $\mathrm{Hom}^{\bullet}_{\mathcal{N}^{\ge x}}( N_{\un{y}}, N_{\un{x}})$.

To see that they are linearly independent, consider the chain of
functors
\[
\NC \to \;  _Q \hspace{-0.03cm}\NC \to \; _Q\hspace{-0.03cm}\NC / \langle K_z
\;|\; z \not \ge x \rangle_{\oplus}
\]
where the first functor is given by localisation, and the second is
the quotient functor. If $y \not \ge x,$ the image of $D_y$  is zero,
and hence we obtain a functor
\[
\NC^{\ge x} \to \; _Q\hspace{-0.03cm}\NC / \langle K_z
\;|\; z \not \ge x \rangle_{\oplus}.
\]
By Proposition \ref{prop:LLupper} the maps are linearly independent on
the right hand side, and hence are on the left hand side too.
Thus the statement of the lemma is true for
$\mathrm{Hom}^{\bullet}_{\mathcal{N}^{\ge x}}( N_{\un{y}}, N_{\un{x}})$. Finally, $D_x$
and $N_{\un{x}}$ are isomorphic in $\mathcal{N}^{\ge x}$ and the lemma
follows.
\end{proof}

Because any object in $\mathcal{N}$ is a direct sum of shifts of
summands of $N_{\un{x}}$ we conclude that the space
$\mathrm{Hom}^{\bullet}_{\mathcal{N}^{\ge x}}(M, D_x)$ is a free graded $R_I$-module
for any object $M \in \mathcal{N}$. We define the \emph{diagrammatic
  character} as follows
\begin{align*}
  \ch : [ \mathcal{N}] &\to N \\
[M] & \mapsto \sum_{y \in {}^I W}  \grk
   \mathrm{Hom}^{\bullet}_{\mathcal{N}^{\ge y}}(M, D_y) n_y,
\end{align*}
where $\grk$ denotes graded rank.

\begin{thm}\label{central} Let $\Bbbk =
\RM$ and  $\hg$ be the dual geometric representation of $W$. The diagrammatic character gives an isomorphism
\[
\ch : [\mathcal{N}]\simto  N
\]
as $[\mathcal{H}] =  H$-modules. Under this isomorphism the
indecomposable object $D_x$ is mapped to the Kazhdan-Lusztig basis
$d_x$.
\end{thm}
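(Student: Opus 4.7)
The plan is to identify $\ch(D_x)$ with the Kazhdan--Lusztig basis element $d_x$ via the uniqueness in Theorem \ref{KL}(3); the $H$-module isomorphism then follows since $\{[D_x(m)]\}$ is a $\mathbb{Z}$-basis for $[\mathcal{N}]$ and $\{d_x\}$ is a $\mathbb{Z}[v^{\pm 1}]$-basis for $N$.

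\textbf{Well-definedness and $H$-equivariance.} First I would check that $\ch$ is well-defined on the split Grothendieck group and $\mathbb{Z}[v^{\pm 1}]$-linear; these are formal from additivity of graded rank and the identity $\gHom(M(1),-) = \gHom(M,-)(-1)$. The key computational input is the explicit formula
\[
\ch(N_{\underline{y}}) = n_e \cdot b_{s_1} b_{s_2} \cdots b_{s_k} \qquad \text{for } \underline{y}=(s_1,\ldots,s_k),
\]
which I would prove by matching coefficients on each $n_x$. On the one hand, Lemma \ref{lem:homI} identifies the coefficient of $n_x$ on the left with $\sum_{\eb} v^{\deg \eb}$, summed over $I$-antispherical subexpressions $\eb$ of $\underline{y}$ expressing $x$. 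On the other, iterating the right-action formulas of \S\ref{ra} produces exactly the same sum: the vanishing case $n_x b_s = 0$ when $xs \notin {}^I W$ is precisely $I$-antisphericity, and the $v^{\pm 1}$ contributions at each up/down step match the defect formula for $\deg \eb$. Since $\ch$ is additive and every object of $\mathcal{N}$ is a summand of some $N_{\underline{y}}$, this formula forces $\ch(MB_s)=\ch(M) b_s$ on all of $[\mathcal{N}]$.

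\textbf{Identifying $\ch(D_x) = d_x$.} Writing $\ch(D_x) = \sum_y m_y(v) n_y$ with $m_y = \grk \gHom_{\mathcal{N}^{\ge y}}(D_x, D_y)$, I would check three properties. \emph{Support}: $m_y = 0$ for $y \not\le x$, since $x \not\ge y$ puts $D_x$ into $\mathcal{N}_{\not\ge y}$ and so $D_x$ vanishes in $\mathcal{N}^{\ge y}$. \emph{Leading term}: $m_x = 1$, since $D_x \cong N_{\underline{x}}$ in $\mathcal{N}^{\ge x}$ for any reduced expression $\underline{x}$ of $x$ (all lower summands $D_z$ with $z < x$ die in the quotient), and Lemma \ref{lem:homI} then yields a single $I$-antispherical subexpression of $\underline{x}$ expressing $x$, namely the all-ones one, in degree zero. \emph{Self-duality}: the diagrammatic duality on $\HC$ (flipping diagrams upside-down) preserves the ideal $\IC$ and hence descends to a duality on $\mathcal{N}$ under which $D_x$ is self-dual; compatibility of $\ch$ with this duality and the bar involution on $N$ is a routine check. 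These three would reduce the theorem to the upper-triangularity $m_y \in v\mathbb{Z}[v]$ for $y < x$, after which Theorem \ref{KL}(3) identifies $\ch(D_x) = d_x$.

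\textbf{Main obstacle.} The hardest point will be the upper-triangularity, which is equivalent to the vanishing $\gHom^{\le 0}_{\mathcal{N}^{\ge y}}(D_x, D_y) = 0$ for $y \ne x$. I would deduce it from the corresponding vanishing $\gHom^{\le 0}_{\HC}(B_x, B_y) = 0$ between distinct indecomposable objects of $\HC$, a consequence of Soergel's Hom formula together with the main theorem of \cite{EW2}; this descends cleanly to $\mathcal{N}$ and then to the quotient $\mathcal{N}^{\ge y}$. With these ingredients in hand, the remaining assembly is essentially bookkeeping from Lemma \ref{lem:homI} and the explicit action formulas.
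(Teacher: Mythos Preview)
Your proposal is correct and follows essentially the same route as the paper: compute $\ch(N_{\underline{y}})$ explicitly via Lemma~\ref{lem:homI} and the action formulas~\eqref{rightactionN}, deduce $H$-equivariance, then pin down $\ch(D_x)=d_x$ by combining the triangular shape, self-duality, and the degree bound coming from \cite{EW2}.

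The one genuine difference is in how you establish self-duality of $\ch(D_x)$. The paper argues inductively: writing $B_{\underline{x}}=B_x\oplus E$ with $E$ self-dual and supported on $y<x$, one gets $N_{\underline{x}}=D_x\oplus\overline{E}$, and since $\ch(N_{\underline{x}})=d_{\underline{x}}$ is bar-invariant and $\ch(\overline{E})$ is bar-invariant by induction, so is $\ch(D_x)$. Your route---showing directly that $\ch$ intertwines the diagrammatic duality on $[\NC]$ with the bar on $N$---also works, but calling it ``routine'' slightly undersells it: both maps $[M]\mapsto\ch([\mathbb{D}M])$ and $[M]\mapsto\overline{\ch([M])}$ are only $\ZM$-linear with the twisted rule $v\mapsto v^{-1}$, so one must argue that agreement on the $[N_{\underline{y}}]$ (which $\ZM[v^{\pm 1}]$-span but do not $\ZM$-span $[\NC]$) suffices. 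It does, precisely because both maps share that twisted linearity. Your approach is a bit more conceptual; the paper's induction avoids having to articulate this compatibility at all.

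Your handling of the upper-triangularity is also fine: Soergel's Hom formula together with \cite{EW2} gives $\grk_R\gHom_\HC(B_x,B_y)=\sum_z h_{z,x}h_{z,y}\in v\ZM[v]$ for $x\ne y$, hence $\gHom^{\le 0}_\HC(B_x,B_y)=0$, and this survives passage to the quotients $\NC$ and $\NC^{\ge y}$. The paper cites the same source but phrases the conclusion directly for $\NC^{\ge y}$.
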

\begin{proof}
It is clear that $\ch$ is a morphism of $ \ZM[v^{\pm
    1}]$-modules. As explained above, the set $\{ D_x | x \in  {}^I W \}$ gives
representatives for the indecomposable objects of $\NC$ up to shifts
and isomorphism. Hence
\[
[\NC] = \bigoplus \ZM[v^{\pm 1}] [D_x].
\]
On the other hand, it is immediate from the definition and Lemma \ref{lem:homI} that
\[
\ch([D_x]) = n_x + \sum_{y < x} n_{y,x}' n_y,
\]
for some $n_{y,x}' \in \ZM_{\ge 0}[v^{\pm 1}]$. We conclude that $\ch$ maps a
basis of $[\NC]$ to a basis of $N$, and hence is an isomorphism of
$\ZM[v^{\pm 1}]$-modules.

For any expression $\un{y} = s_1 \dots s_m,$ set $d_{\un{y}} := n_{\id}
\cdot b_{s_1} \dots b_{s_m}$.  Lemma \ref{lem:homI} combined with Equation
\eqref{rightactionN}  implies, by construction of  the $I$-antispherical light leaves,  that $\ch([N_{\un{y}}]) = d_{\un{y}}$. For
any $s \in S$  we have tautologically
\[
\ch( [N_{\un{y}}][ B_s]) = \ch([N_{\un{y}'}]) = d_{\un{y}'} = d_{\un{y}}
\cdot b_s,
\]
where $\un{y}' = s_1 \dots s_m s$. We conclude that $\ch$ is a map of
$[\mathcal{H}] =  H$-modules on the $\ZM[v^{\pm 1}]$-submodule generated by
$[N_{\un{y}}]$, where $\un{y}$ ranges over all expressions. However
this submodule is all of $[\NC]$ and hence, 
$\ch$ is an isomorphism of $[\mathcal{H}] =  H$-modules.

We will prove by induction  in $l(x)$ that $\ch([D_x])=d_x$, so let us
suppose that we know this equality for all $y$ such that $l(y)<l(x)$.
Let $\un{x}$ be a reduced expression for $x \in  {}^I W$. Then
in $\mathcal{H}$ we have
\[
B_{\un{x}} = B_x \oplus E,
\]
where $E$ is some self-dual object, all of whose indecomposable
summands are parametrized by $y < x$. By acting on $N_{\id}$ we
conclude that
\[
N_{\un{x}} = D_x \oplus \overline{E},
\]
where $\overline{E}$ is a self-dual combination of $D_y$ with $y < x$
and $y \in  {}^I W$. As observed above, we have $\ch([N_{\un{x}}]) =
d_{\un{x}}$ and so it is self-dual. By induction, $\ch([\overline{E}])$ is
self-dual. We deduce that $\ch([D_x])$ is self-dual as  well, as the
difference of two self-dual elements.

Finally, by the main theorem of \cite{EW2} (more precisely, see second
sentence following \cite[Theorem 3.6]{EW2})  we know that
$\mathrm{Hom}^{\bullet}_{\mathcal{N}^{\ge y}}(D_x, D_y )$ is generated in strictly
positive degrees for $y < x$. We conclude that the polynomials
$n'_{y,x}$ defined above actually satisfy $n'_{y,x} \in v\ZM[v]$ for
$y < x$. Hence by the uniqueness of the Kazhdan-Lusztig basis we
deduce that
\[
\ch([D_x]) = d_x.
\]
The theorem follows.
\end{proof}


\begin{cor}
  The anti-spherical Kazhdan-Lusztig polynomials $n_{y,x}$ have
  non-negative coefficients.
\end{cor}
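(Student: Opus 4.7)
The plan is to read off non-negativity directly from the categorification theorem just proved. By that theorem, under the isomorphism $\ch : [\NC] \simto N$ the indecomposable object $D_x$ maps to the Kazhdan-Lusztig basis element $d_x \in N$. So by definition of the Kazhdan-Lusztig polynomials in $N$ we have
\[
d_x = \ch([D_x]) = \sum_{y \in {}^I W} n_{y,x}\, n_y.
\]
Hence it suffices to show that each coefficient appearing in $\ch([D_x])$ is a Laurent polynomial with non-negative integer coefficients.

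Next I would simply unwind the definition of $\ch$. By construction,
\[
\ch([D_x]) = \sum_{y \in {}^I W} \grk \gHom_{\NC^{\ge y}}(D_x, D_y)\, n_y,
\]
so the coefficient of $n_y$ in $\ch([D_x])$ is the graded rank of the free graded $R_I$-module $\gHom_{\NC^{\ge y}}(D_x, D_y)$. Here the fact that this $\Hom$ space is free over $R_I$ (and hence has a well-defined graded rank) is precisely Lemma \ref{lem:homI}, applied after expressing $D_x$ as a summand of $N_{\un{x}}$ for a reduced expression $\un{x}$ of $x$.

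Since the graded rank of any free graded $R_I$-module is a Laurent polynomial in $v$ with non-negative integer coefficients, the coefficient of $n_y$ in $\ch([D_x])$ lies in $\ZM_{\ge 0}[v^{\pm 1}]$. Combining this with the identity $\ch([D_x]) = d_x$ gives $n_{y,x} \in \ZM_{\ge 0}[v^{\pm 1}]$, which is exactly the claim. There is no real obstacle here: the corollary is essentially a restatement of the categorification theorem, with the positivity coming for free from the fact that graded ranks of free modules over $R_I$ are manifestly non-negative.
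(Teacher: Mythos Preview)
Your proposal is correct and is essentially the argument the paper intends: the corollary has no separate proof in the paper because the non-negativity is already contained in the proof of the preceding theorem, where it is observed that $\ch([D_x]) = n_x + \sum_{y<x} n'_{y,x} n_y$ with $n'_{y,x} \in \ZM_{\ge 0}[v^{\pm 1}]$ (as graded ranks of free $R_I$-modules), and then that $\ch([D_x]) = d_x$. Your write-up makes this reasoning explicit and is exactly on target.
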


Also, if $J \subset I \subset S$ it is immediate (either from Remark
\ref{bre} or from the fact that $\NC_I$ is a quotient of $\NC_J$) that we have a surjection
\[
\mathrm{Hom}^{\bullet}_{\mathcal{N}_J^{\ge y}}(D^J_x, D^J_y) \onto \mathrm{Hom}^{\bullet}_{\mathcal{N}_I^{\ge y}}(D^I_x, D^I_y),
\]
and we deduce:

\begin{cor}\label{Br}
Brenti's Monotonicity conjecture: $J\subseteq I$ implies that $n_{y,x}^I\leq n_{y,x}^J$, for $x,y\in {}^IW.$
\end{cor}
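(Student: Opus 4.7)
My plan is to unpack the asserted surjection and then convert it into a coefficient-wise inequality via the categorification theorem. By that theorem, for any $y,x\in {}^I W$ we have
\[
n^I_{y,x}=\grk\, \gHom_{\mathcal{N}_I^{\ge y}}(D^I_x,D^I_y),
\]
and the analogous statement holds for $J$ in place of $I$ (noting that ${}^I W\subseteq {}^J W$, so the symbols $D^J_x,D^J_y$ make sense). A graded surjection of free graded modules forces the coefficient-wise inequality of their graded ranks, so it suffices to produce a degree-preserving surjection
\[
\gHom_{\mathcal{N}_J^{\ge y}}(D^J_x,D^J_y)\twoheadrightarrow \gHom_{\mathcal{N}_I^{\ge y}}(D^I_x,D^I_y).
\]

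To build this surjection I first note that both $\mathcal{N}_I$ and $\mathcal{N}_J$ are quotients of $\mathcal{H}$ by an ideal of additive subcategories: for $\mathcal{N}_I$ this ideal is generated by objects labelled by $I$-sequences, for $\mathcal{N}_J$ by objects labelled by $J$-sequences. Since $J\subseteq I$, every sequence starting with an element of $J$ starts a fortiori with an element of $I$, so the $J$-ideal is contained in the $I$-ideal. This yields a canonical quotient functor $F\co \mathcal{N}_J\to \mathcal{N}_I$, surjective on every Hom space, which sends $B_x$ to its class in $\mathcal{N}_I$; in particular $F(D^J_x)=D^I_x$ for $x\in {}^I W$, and $F(D^J_z)=0$ for $z\in {}^J W\setminus {}^I W$.

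Now I descend to the ``$\ge y$'' versions. The composite $\mathcal{N}_J\xrightarrow{F}\mathcal{N}_I\twoheadrightarrow \mathcal{N}_I^{\ge y}$ kills every $D^J_z$ with $z\in {}^J W$ and $z\not\ge y$: either $z\notin {}^I W$, in which case $F(D^J_z)=0$, or $z\in {}^I W$ with $z\not\ge y$, in which case $F(D^J_z)=D^I_z$ is killed in $\mathcal{N}_I^{\ge y}$. By the universal property of additive quotients, the composite factors through a functor $\overline{F}\co \mathcal{N}_J^{\ge y}\to \mathcal{N}_I^{\ge y}$, still surjective on all Hom spaces (any morphism in the target lifts to $\mathcal{N}_I$, then to $\mathcal{N}_J$, then descends to $\mathcal{N}_J^{\ge y}$). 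Applying $\overline{F}$ to the graded Hom spaces $\gHom(D^J_x,D^J_y(n))$ produces the required surjection, which in turn gives $n^I_{y,x}\le n^J_{y,x}$ coefficient-wise.

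The only thing to verify carefully is the compatibility of the two quotient steps in the third paragraph; once that is in place, the argument is formal. There is no genuine analytic or combinatorial obstacle, because the inclusion $J\subseteq I$ of sets of simple reflections trivially upgrades to an inclusion of tensor ideals in $\mathcal{H}$, which is all that the surjection needs.
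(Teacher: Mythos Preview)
Your proposal is correct and follows essentially the same route as the paper: the paper deduces the corollary from the surjection
\[
\gHom_{\mathcal{N}_J^{\ge y}}(D^J_x, D^J_y) \twoheadrightarrow \gHom_{\mathcal{N}_I^{\ge y}}(D^I_x, D^I_y),
\]
which it obtains either from Remark~\ref{bre} or from the fact that $\NC_I$ is a quotient of $\NC_J$; you have chosen the second of these and written it out carefully. One small point you glide over (as does the paper) is that the two Hom spaces are free over different rings $R_J$ and $R_I$, but since $R_I$ is a graded quotient of $R_J$, tensoring down to the residue field still converts the surjection into a coefficient-wise inequality of graded ranks.
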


\bibliographystyle{myalpha}
\bibliography{gen}

\end{document}